\numberwithin{equation}{section}
\theoremstyle{plain}
\newtheorem{thm}{Theorem}[section]
\newtheorem{lemma}[thm]{Lemma}
\newtheorem{prop}[thm]{Proposition}
\newtheorem{cor}[thm]{Corollary}
\theoremstyle{definition}
\newtheorem{defn}[thm]{Definition}
\newtheorem*{ack}{Acknowledgements}
\theoremstyle{remark}
\newtheorem{rmk}[thm]{Remark}
\newtheorem*{exmp}{Example}
\newcommand{\CC}{\mathbb{C}}
\newcommand{\FF}{\mathbb{F}}
\newcommand{\NN}{\mathbb{N}}
\newcommand{\RR}{\mathbb{R}}
\newcommand{\ZZ}{\mathbb{Z}}
\newcommand{\bs}{\mathbf{s}}
\newcommand{\cP}{\mathcal{P}}
\newcommand{\fJ}{\mathfrak{J}}
\newcommand\numberthis{\addtocounter{equation}{1}\tag{\theequation}}
\DeclareMathOperator{\Mat}{Mat}
\DeclareMathOperator{\re}{Re}
\DeclareMathOperator{\wt}{wt}
\newcommand{\ebar}{\bar{e}}
\newcommand{\bld}[1]{\mathbf{#1}}
\newcommand{\ceil}[1]{\lceil #1 \rceil}
\newcommand{\floor}[1]{\lfloor #1 \rfloor}
\newcommand{\inner}[2]{\langle #1, #2 \rangle}
\newcommand{\smod}[1]{{\ \mathrm{mod}\ #1}}
\begin{document}

\title{Vanishing of Multizeta Values over $\FF_q[t]$ at Negative Integers}

\author{Shuhui Shi}
\address{Department of Mathematics, Texas A{\&}M University, College Station,
TX 77843, USA}
\email{shuhui@math.tamu.edu}




\begin{abstract}
Let $\FF_q$ be the finite field of $q$ elements. In this paper, we study the vanishing behavior of multizeta values over $\FF_q[t]$ at negative integers. These values are analogs of the classical multizeta values. At negative integers, they are series of products of power sums $S_d(k)$ which are polynomials in $t$. By studying the $t$-valuation of $S_d(s)$ for $s < 0$, we show that multizeta values at negative integers vanish only at trivial zeros. The proof is inspired by the idea of Sheats in the proof of a statement of ``greedy element" by Carlitz.
\end{abstract}


\maketitle

\section{Introduction} \label{sec:intro}

Classical multizeta values (i.e., over $\ZZ$), also known as ``multiple zeta values", are defined as the convergent series
\[
\zeta(\bs) =\sum_{ n_1 > n_2  > \cdots > n_r  \geq 1} \frac{1}{n_1^{s_1}n_2^{s_2} \cdots n_r^{s_r}} \quad \in \RR,
\]
where $\bs=(s_1, \ldots, s_r) \in \ZZ_+^r$ with $s_1 > 1$. We call $r$ the \textit{depth} and $\sum_i s_i$ the \textit{weight} of $\zeta(\bs)$. Here and in the rest of the paper, $\ZZ_+$ is the set of positive integers and $\NN=\ZZ_+ \cup \{0\}$. Multizeta values of depth $1$ are the usual Riemann zeta values. Double zeta values (i.e., $r=2$) were first considered by Euler in 1776 \cite{Euler-1775} in the study of $\zeta(3)$. After a long time of oblivion, multizeta values of higher depth were introduced independently by Hoffman \cite{Hoffman-1992} and Zagier \cite{Zagier-1992} in 1992. During the last three decades, great attention has been drawn to the study of multizeta values because of their appearance in many different contexts, including the absolute Galois group \cite{Goncharov-2000}, periods of mixed Tate motives \cite{Deligne-Goncharov-2005, Goncharov-2005}, knot invariants and calculations of integrals associated to Feynman diagrams in perturbative quantum field theory \cite{Broadhurst-Kreimer-1997}. These various connections with other fields have led to big progresses in the study of classical multizeta values, although some fundamental questions still remain open (see \cite[Preface]{Unpublished:Gil-Fresan}). 

Having learned about the rich interconnections in the classical case, Thakur, in 2002, defined two types of multizeta values over function fields \cite[Sec. 5.10]{Book:Thakur-2004}, one complex valued (generalizing special values of Artin-Weil zeta functions) and the other with values in Laurent series over finite field (generalizing Carlitz zeta values). The first type was completely evaluated in \cite{Book:Thakur-2004} for $\FF_q(t)$ (see \cite{Masri-2006} for a study in the higher genus case). In this paper, we focus on the second type and stick to the rational function field $\FF_q(t)$.

Throughout this paper, $p$ is a prime and $q := p^f$ is a power of $p$. We say an integer is \textit{$q$-even} if it is divisible by $q-1$ and \textit{$q$-odd} otherwise. Let $K := \FF_q(t)$ be the rational function field over the finite field $\FF_q$, $\infty$ be the rational place of $K$ with uniformiser $1/t$ and $K_\infty := \FF_q((1/t))$ be its completion at $\infty$. Let $A := \FF_q[t]$ be the polynomial ring in $t$, $A_+ := \{ \text{monics in $A$} \}$ and $A_{d+}:= \{ \text{monic in $A$ of degree $d$} \}$ for $d \geq 0$. For $d \geq 0$ and $s \in \ZZ$, we define the power sum
\begin{equation}\label{E:powersum}
  S_d(s):=\sum_{a \in A_{d+}} \frac{1}{a^s} \in K.
\end{equation}
Note that $S_d(s) \in A$ if $s < 0$. The \textit{multizeta values} at $\bs \in \ZZ^r$ over $\FF_q[t]$ are defined as
\begin{equation}\label{E:mzv}
    \zeta(\bs):=\sum_{d_1 > \cdots > d_r \geq 0} S_{d_1}(s_1) \cdots S_{d_r}(s_r) \quad \in K_\infty.
\end{equation}
The convergence of $\zeta(\bs)$ at positive integers, i.e., $\bs \in \ZZ_+^r$, is clear from definition of $S_d$. At non-positive integers, it follows from the fact that $S_d(0)=0$ for $d > 0$ and $S_d(s)=0$ for $d \gg 0$ if $s < 0$ (see \cref{sec:main result} for details). At positive integers, the definition above can be restated as
\begin{equation*}
  \zeta(\bs) = \sum_{a_1, a_2, \ldots, a_r} \frac{1}{a_1^{s_1} a_2^{s_2} \cdots a_r^{s_r}} \in K_\infty, 
\end{equation*}
where the sum is over all $a_i \in A_{d_i+}$ with $d_1 > d_2 > \cdots > d_r \geq 0$. Following the classical case, we say $\zeta(\bs)$ is of depth $r$ and weight $\sum_i s_i$. For general introduction of results on function field multizeta values and comparison with the classical case, we refer the reader to the survey papers \cite{Chang-2014, Thakur-2017}. In this paper, $\zeta(\bs)$ is used to denote multizeta values in both the classical and function field cases. It should be clear which one we are referring to from the context. 

A natural question to ask is when $\zeta(\bs)$ vanishes. In classical case, $\zeta(\bs) > 0$ by definition at positive integers with $s_1 > 1$. Treating $s_i$'s as complex variables, the series defining $\zeta(\bs)$ is absolutely convergent in the region
$\{ (s_1, \ldots, s_r) \in \CC^r: \re(s_1 + \cdots + s_j) > j \text{ for } 1 \leq j \leq r \}$ and can be meromorphically continued to $\CC^r$ with singular hyperplanes $\{ s_1=1, s_1+s_2 \in \{2,1,0,-2, -4, -6, \ldots \}, \sum_{i=1}^k s_i \in \ZZ_{\leq k} \text{ for } 3 \leq k \leq r\}$. In particular, all the negative integer points, except when $r=2$ or $s_1+s_2$ odd, lie on these hyperplanes. Moreover, they are points of indeterminacy. See \cite{Furusho-Komori-Matsumoto-Tsumura-2017} and the references mentioned in its ``Introduction" for several different approaches to define and determine the multizeta values at these points.

In function field case, Thakur \cite{Thakur-powersums-2009} showed that $\zeta(\bs) \neq 0$ at positive integers. At negative integers, the vanishing of multizeta values of depth $1$ is completely understood by Goss \cite{Goss-1979}. Its vanishing behavior is quite similar to that of the Riemann zeta values although lacking a functional equation. In this paper, we study the vanishing of $\zeta(\bs)$ at negative integers of higher depth. 

Replacing $a$ by $t^d+\sum_{i=1}^d \theta_i t^{d-i}$ in \eqref{E:powersum}, we can rewrite $S_d(s)$ as a sum of monomials in $t$ for negative $s$, whose sum indices are in $\NN^{d+1}$ satisfying some restrictions. Denote the set of these indices as $U_d(-s)$. Our main result (restated as Theorem \ref{T:modest}) gives an explicit description of the $t$-valuation of $S_d(s)$ in terms of elements in $U_d(-s)$. 

\begin{thm} \label{T:main result}
  Assume $U_d(-s) \neq \emptyset$, then there is a unique monomial in the sum $S_d(s)$ acheving the lowest degree. Moreover, this term correpsonds to the element in $U_d(-s)$ whose reverse is lexicographically the largest.
\end{thm}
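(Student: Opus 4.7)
The plan is to first expand $S_d(s)$ explicitly using the multinomial theorem, thereby reducing the theorem to a combinatorial statement about minimizing a linear functional on $U_d(-s)$, and then to prove uniqueness of the minimizer by a greedy / exchange argument in the spirit of Sheats. Writing each monic $a = t^d + \sum_{i=1}^d \theta_i t^{d-i} \in A_{d+}$ with $\theta_i \in \FF_q$, the multinomial theorem gives
\[
  a^{-s} = \sum_{i_0 + \cdots + i_d = -s} \binom{-s}{i_0, \ldots, i_d}\, t^{\sum_{j=0}^{d-1}(d-j)\, i_j}\, \theta_1^{i_1} \cdots \theta_d^{i_d}.
\]
Summing over $(\theta_1, \ldots, \theta_d) \in \FF_q^d$ and using $\sum_{\theta \in \FF_q} \theta^k = -1$ when $k$ is a positive multiple of $q-1$ and $0$ otherwise (for $k \geq 0$), one obtains
\[
  S_d(s) = (-1)^d \sum_{\ui \in U_d(-s)} \binom{-s}{i_0, \ldots, i_d}\, t^{D(\ui)}, \qquad D(\ui) := \sum_{j=0}^{d-1}(d-j)\, i_j,
\]
where $U_d(-s)$ consists of $(i_0, \ldots, i_d) \in \NN^{d+1}$ with $\sum_j i_j = -s$, each $i_j$ ($j \geq 1$) a positive multiple of $q-1$, and multinomial coefficient nonzero modulo $p$ (by Lucas). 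The theorem thus reduces to showing that $D$ attains a strict minimum on a nonempty $U_d(-s)$ at the reverse-lex largest element $\ui^* = (i_0^*, \ldots, i_d^*)$.

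I would construct $\ui^*$ recursively from the top: choose $i_d^*$ to be the largest admissible value (nonnegative multiple of $q-1$, at most $-s$, compatible with the Lucas constraint on the remainder) for which the remaining weight still admits a completion into the appropriate analogue of $U_{d-1}$; then choose $i_{d-1}^*$ by the same rule, and so on down to $i_0^*$. The standing hypothesis $U_d(-s) \neq \emptyset$ guarantees the recursion terminates inside $U_d(-s)$. Now take any other $\ui \in U_d(-s)$ and let $k$ be the largest index where $\ui$ and $\ui^*$ disagree; reverse-lex maximality of $\ui^*$ forces $i_k^* > i_k$. Setting $e_l := i_l - i_l^*$ and using $\sum_l e_l = 0$ together with $e_l = 0$ for $l > k$, a short manipulation gives
\[
  D(\ui) - D(\ui^*) = \sum_{l < k}(k - l)\, e_l.
\]

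The delicate point, and the main obstacle, is that the $e_l$ for $l < k$ need not share a sign, so strict positivity of this weighted sum is not automatic from the sum conditions alone. Here I would mimic Sheats's treatment of Carlitz's ``greedy element'' and proceed by induction on $d$: from any non-greedy $\ui$, produce an explicit local exchange move that transfers a Lucas-compatible block whose total size is divisible by $q-1$ from some position $l < k$ up to position $k$, yielding a new tuple $\ui' \in U_d(-s)$ whose reverse is strictly lex larger than that of $\ui$ and whose degree is strictly smaller, $D(\ui') < D(\ui)$. Iterating such moves gives a finite strictly decreasing chain $D(\ui) > D(\ui') > \cdots > D(\ui^*)$, which simultaneously establishes the strict inequality and uniqueness of the minimizer. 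The real technical burden is showing that such an exchange can always be performed while preserving the two subtle constraints of $U_d(-s)$: non-vanishing of the multinomial (the Lucas condition in base $p$) and the positivity plus $(q-1)$-divisibility of each $i_j$ for $j \geq 1$. That base-$p$ digit bookkeeping is exactly what the Sheats--Carlitz greedy framework was designed for, and it is where the genuine combinatorial difficulty lies.
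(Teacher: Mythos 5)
Your setup is correct and matches the paper's: the multinomial expansion of $S_d(s)$, the identification of the index set $U_d(-s)$ via the character sum and Lucas' theorem, the degree function $D$, and the reduction to showing that the reverse-lex largest (``modest'') element is the unique minimizer of $D$ on $U_d(-s)$. You also correctly isolate the crux: the differences $e_l$ for $l<k$ need not share a sign, so one needs a genuine combinatorial argument. But the argument you then sketch --- iterate a single local exchange move that transfers a $(q-1)$-divisible, Lucas-compatible block from a lower position up to the top disagreement position --- is not a proof; it is a restatement of the theorem's difficulty. Such local exchanges do exist when $q=p$ (there $q$-evenness of $m$ is equivalent to $(q-1)\mid\#\cP(m)$, and the paper proves this special case in essentially the way you describe), but for $q=p^f$ with $f>1$ the $q$-evenness condition lives in base $q$ while the no-carry condition lives in base $p$, and there is no reason a sub-multiset of $\cP(m_{l})$ can be split off so that both constraints survive at both positions. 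Saying that ``the Sheats--Carlitz greedy framework was designed for this'' does not supply the construction, and in fact that framework does not produce single-step exchanges.

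The paper's general-case proof is not a local exchange argument at all. It passes to the vector encoding $\Gamma(n)\in\NN^f$ of base-$p$ digits grouped by residue mod $f$, characterizes the $q$-even vectors as $E\ZZ^f\cap(\NN^f\setminus\{\bar 0\})$, develops nonemptiness criteria via the sets $I_m$ and $J_m$, and then, assuming a minimal counterexample with modest $\bld{M}$ and optimal $\bld{O}$ distinct, builds an entirely new composition $\bld{Z}$ through a recursive definition of auxiliary real vectors $\bar\phi_j$ (coordinatewise minima of $\theta_{i,j}-1$ and $p\phi_{i+1,j}$), verifies in Proposition \ref{P:mainprop} that the resulting matrix of column vectors is valid, and finally bounds $\wt(\bld{O})-\wt(\bld{Z})$ from below by a chain of $p$-power estimates with a separate case analysis for $p=2$. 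None of this global reconstruction is recoverable from your sketch. So your proposal is a correct reduction together with an accurate diagnosis of where the difficulty sits, but the essential construction --- which is the actual content of the theorem for general $q$ --- is missing.
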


This result implies monotonicity of the $t$-valuation of $S_d(s)$ with respect to $d$, using which we completely solve the vanishing of $\zeta(\bs)$ at negative integers (stated as Theorem \ref{T:vanishing at negative} later). See \cref{subsec:trivial} for definition of ``trivial zero".

\begin{thm} \label{T:corollary}
  At negative integers, $\zeta(\bs)$ of depth at least 2 only vanishes at trivial zeros.
\end{thm}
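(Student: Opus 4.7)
The plan is to use Theorem \ref{T:main result} to establish strict monotonicity of the $t$-adic valuation $v_t(S_d(s))$ as a function of $d$, and then to exploit this monotonicity in order to isolate a single dominating monomial in the finite sum defining $\zeta(\bs)$.

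First, I would fix a negative integer $s$ and extract from Theorem \ref{T:main result} an explicit formula for $v_t(S_d(s))$: the lowest-degree monomial in $S_d(s)$ is produced by a single combinatorial object, the reverse-lex-largest element $\ucI \in U_d(-s)$, so the valuation is the degree of the monomial attached to that element. The step to take is to compare this distinguished element of $U_d(-s)$ with that of $U_{d+1}(-s)$ and show that the associated monomial degree strictly increases with $d$, on the range where $U_d(-s) \neq \emptyset$. This step is the main technical ingredient, and I expect the bookkeeping of the reverse-lex comparison across neighboring $d$ to be where most of the real work lives.

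Second, I would apply this monotonicity to the finite sum
\[
  \zeta(\bs) = \sum_{d_1 > d_2 > \cdots > d_r \geq 0} S_{d_1}(s_1) \cdots S_{d_r}(s_r),
\]
which is finite because $S_d(s_i) = 0$ for $d \gg 0$. Each summand has $t$-valuation $\sum_i v_t(S_{d_i}(s_i))$, and strict monotonicity in each coordinate means that this total is minimized uniquely by the component-wise smallest admissible tuple, namely the tuple taking the smallest $d_i$'s for which $U_{d_i}(-s_i)\neq\emptyset$ subject to $d_1 > \cdots > d_r \geq 0$. Since by Theorem \ref{T:main result} each factor already has a unique lowest-degree monomial, the product also has a unique monomial of smallest degree, and no other summand can contribute to that degree. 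This forces $\zeta(\bs) \neq 0$ whenever a minimizing admissible tuple exists.

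Finally, I would identify the cases in which \emph{no} such minimizing admissible tuple exists: either the $d_i$'s forced by each coordinate collide in a way incompatible with the strict inequalities, or for some $i$ the set $U_d(-s_i)$ is empty for all relevant $d$ (equivalently $S_d(s_i) = 0$ throughout the summation range). I expect this combinatorial obstruction to match exactly the trivial-zero classification of \cref{subsec:trivial}, giving the desired converse. The chief obstacle, beyond the strict monotonicity in the first step, will be matching this obstruction to the trivial-zero locus cleanly, in particular around the boundary values of $d$ where $U_d(-s)$ transitions between empty and nonempty, since those boundaries are precisely what distinguish trivial zeros from genuine vanishing.
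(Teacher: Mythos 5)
Your proposal follows essentially the same route as the paper: Theorem \ref{T:main result} gives $v_t(S_d(s))$ as the weight of the modest element, one deduces monotonicity of $\nu_d(s):=v_t(S_d(s))$ in $d$ (the paper's Corollary \ref{C:valuation monotonicity}, proved in three lines by collapsing the last two entries of the modest element), and then the strict triangle inequality shows the term $S_{r-1}(s_1)\cdots S_0(s_r)$ alone determines $v_t(\zeta(\bs))$, so nonvanishing holds exactly when that term is nonzero, i.e.\ when $r-i\le L_{-s_i}$ for all $i\le r-1$ --- which is precisely the non-trivial-zero condition.

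One claim in your first step is false as stated and needs a small patch: the monotonicity is \emph{not} strict at the bottom step. If $-s$ is $q$-even then $(0,-s)$ is the modest element of $U_1(-s)$, so $\nu_1(s)=0=\nu_0(s)$; the paper accordingly records only $\nu_1(s)\ge\nu_0(s)$, with strict inequalities from $\nu_1$ upward. Your ``minimized uniquely by the component-wise smallest tuple'' argument therefore cannot rely on strictness coordinate by coordinate; it is saved by the observation that any tuple other than $(r-1,\dots,1,0)$ satisfying $d_1>\cdots>d_r\ge 0$ must have $d_j>r-j$ for some $j<r$ (a deviation only at $j=r$ is impossible, since $d_r\ge1$ forces $d_{r-1}\ge2$), and at such a $j$ the chain of strict inequalities among $\nu_1,\dots,\nu_{\lfloor L_{-s_j}\rfloor}$ applies. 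Also, your worry in the last step about colliding $d_i$'s is vacuous: the minimizing tuple is always $(r-1,\dots,0)$, and matching the obstruction to the trivial-zero locus is immediate from Proposition \ref{P:vanishcondition} and Definition \ref{D:trivialzero}.
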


Here is the outline of the paper. In \cref{sec:main result}, we study the behavior of $S_d(s)$ at negative $s$ in detail and discuss how our main result implies Theorem \ref{T:corollary}. \cref{sec:proof} gives the proof of Theorem \ref{T:main result}.

\begin{ack}
  The results of this paper are a part of the author's Ph.D. thesis at the University of Rochester. The author would like to express her sincere gratitude for her advisor, Prof. Dinesh Thakur, for suggesting this problem and for all of his guidance and encouragement.
\end{ack}

\section{Main Result} \label{sec:main result}

In this section, we study the vanishing behavior of multizeta values in detail. We continue to use the notations in the previous section. Our main object of study is $S_d(s)$, the building blocks of multizeta values. The reader will see that the vanishing of $\zeta(\bs)$ is really a reflection of properties of $S_d(s)$.

\subsection{Trivial Zeros.} \label{subsec:trivial}

Let $s < 0$. We first take a closer look at when $S_d(s)$ vanishes. Writing out the coefficients of $a$  in~\eqref{E:powersum}, we get
\begin{align*}
  S_d(s) &= \sum_{\theta_i \in \FF_q} (t^d+\theta_1 t^{d-1}+ \cdots +\theta_d)^{-s} \\
  &= \sum_{\theta_i \in \FF_q}\sum_{\substack{m_0+\cdots +m_d = -s\\ m_i \geq 0}} \binom{-s}{m_0, \ldots, m_d}\, \theta_1^{m_1} \cdots \theta_d^{m_d}\, t^{dm_0+(d-1)m_1+\cdots +m_{d-1}}\\
  &= (-1)^d\sum_{\substack{m_0+\cdots +m_d = -s\\ m_0 \geq 0,\ m_i > 0\ \text{$q$-even for }\, i > 0}} \binom{-s}{m_0, \ldots, m_d}\, t^{dm_0+(d-1)m_1+\cdots +m_{d-1}}\\
  &= (-1)^d\sum_{\substack{\bigoplus_{i=0}^d m_i= -s\\ m_0 \geq 0,\ m_i > 0\ \text{$q$-even for }\, i > 0}} \binom{-s}{m_0, \ldots, m_d}\, t^{dm_0+(d-1)m_1+\cdots +m_{d-1}}, \numberthis \label{E:Sd(s)}
\end{align*}
where $\bigoplus_{i=0}^d m_i$ denotes sum $\sum_{i=0}^d m_i$ with no carry over of digits base $p$. The third equality comes from exchanging the two sum indices and the fact that $\sum_{\theta \in \FF_q} \theta^k=-1$ if $k$ is a positive multiple of $q-1$ and 0 otherwise. The last equality follows from Lucas' theorem. 

For $k>0$ and $d \geq 0$, let
\[
U_d(k) := \{ (m_0, \ldots, m_d) \in \NN^{d+1}: k = \textstyle\bigoplus_{i=0}^d m_i \text{ and } m_i > 0 \text{ is $q$-even for $1 \leq i \leq d$}  \}.
\]
Let $\cP(n)$ be the multiset of $p$-powers adding up to $n$ with no carry over in base $p$. More precisely, if $n=\sum_{i=0}^k a_i p^i$ with $0 \leq a_i < p$, $\cP(n) := \{ \{p^i\}_{a_i}: 0 \leq i \leq k \}$, where $\{m\}_{k}$ denotes the sequence $m, \ldots, m$ with $m$ repeated $k$ times. Then the condition $k = \bigoplus_{i=0}^d m_i$ is equivalent to
\begin{equation} \label{E:no carry over}
  \cP(k) = \bigsqcup_{i=0}^d \cP(m_i).
\end{equation}
Note that $U_d(-s)$ is the set of sum indices in \eqref{E:Sd(s)}. Clearly, $S_d(s)$ vanishes if $U_d(-s) = \emptyset$. In \cite{Carlitz-1948}, Carlitz claimed without proof that the converse also holds. More precisely, he asserted that if $U_d(-s) \neq \emptyset$, the term $t^{dm_0+(d-1)m_1+\cdots +m_{d-1}}$ with $(m_0, \ldots, m_d)$ lexicographically largest among sum indices attains the unique maximal degree. Such $(m_0, \ldots, m_d)$ is called \textit{greedy}. This claim was not proved until 50 years later. Diaz-Vargas \cite{Diaz-Vargas-1996} gave a proof for the case $q=p$ and a general proof for any $q$ is given by Sheats \cite{Sheats-1998}.

\begin{thm}[Calitz, Diaz-Vargas, Sheats] \label{T:greedy}
  For $s< 0$, $S_d(s) \neq 0$ if and only if $U_d(-s) \neq \emptyset$. Moreover, if $U_d(-s) \neq \emptyset$, the summand in $S_d(s)$ corresponding to the greedy element achieves the unique maximal degree.
\end{thm}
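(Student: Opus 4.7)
The plan is to work directly from the explicit expansion \eqref{E:Sd(s)}. The easy direction is immediate: if $U_d(-s)=\emptyset$, the displayed sum in \eqref{E:Sd(s)} is empty, so $S_d(s)=0$. For the converse, I will produce an uncancelled term by showing that the maximum-degree monomial occurs for exactly one index tuple and has nonzero coefficient in $\FF_p$.

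Two reductions simplify the problem. First, the condition $\bigoplus_{i=0}^d m_i=-s$ means no carrying occurs in base $p$, so the ordinary sum also gives $\sum_{i=0}^d m_i=-s$, and
\[
  dm_0+(d-1)m_1+\cdots+m_{d-1} \;=\; \sum_{i=0}^d (d-i)\,m_i \;=\; d(-s) \;-\; \sum_{i=0}^d i\cdot m_i.
\]
Thus maximizing the $t$-degree on $U_d(-s)$ is equivalent to minimizing the weighted sum $\sum_{i=0}^d i\cdot m_i$. Second, by Lucas' theorem the same no-carry condition forces $\binom{-s}{m_0,\ldots,m_d}\not\equiv 0\pmod p$ for every tuple in $U_d(-s)$. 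Hence no summand in \eqref{E:Sd(s)} vanishes, and the task reduces to showing that the minimum of $\sum i\cdot m_i$ is attained at a unique tuple and that this tuple is the greedy (lex-largest) one.

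The core is an exchange argument. Heuristically, $m_0$ carries weight $0$ in $\sum i\cdot m_i$ while the total $\sum m_i=-s$ is fixed, so pushing mass into $m_0$ lowers the cost; next into $m_1$, and so on --- exactly the greedy prescription. To make this rigorous, suppose a minimizer $(m'_0,\ldots,m'_d)$ disagrees with the greedy tuple $(m_0,\ldots,m_d)$ at some smallest index $j$. By greediness $m'_j<m_j$, so via the partition \eqref{E:no carry over} some $p$-powers appearing in $\cP(m_j)\setminus\cP(m'_j)$ must reside at higher indices $k>j$ in $m'$. Migrating a suitable bundle of such $p$-powers from those positions back down to index $j$ strictly decreases $\sum i\cdot m_i$, contradicting minimality.

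The hard part will be showing that such a migration can always be arranged to land back in $U_d(-s)$. A single $p$-power shifted from position $k\geq 1$ may destroy the $q$-even or positivity conditions at $k$, and adjoining a digit already present in $\cP(m'_j)$ to position $j$ would create a base-$p$ carry. Following Sheats, one sidesteps both pitfalls by transporting carefully chosen bundles of digits whose total is a positive multiple of $q-1$ and that are disjoint from $\cP(m'_j)$, performing a delicate case analysis on how the base-$p$ digit profiles of the tuples $m$ and $m'$ are distributed across positions $j,j+1,\ldots,d$. The combined rigidity imposed by positivity and $q$-evenness of every $m_i$ with $i\geq 1$ should guarantee that a legal bundle always exists whenever $m'$ differs from the greedy tuple.
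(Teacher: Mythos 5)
The paper does not prove this theorem; it states it as a known result of Carlitz, Diaz-Vargas, and Sheats and simply cites \cite{Carlitz-1948, Diaz-Vargas-1996, Sheats-1998}. So there is no in-paper proof to compare against, and your attempt must be judged on its own. Your preliminaries are all correct: the empty-sum direction is immediate; the identity $dm_0+\cdots+m_{d-1}=d(-s)-\sum_i i\,m_i$ is valid because the no-carry condition forces the ordinary sum $\sum m_i=-s$; and Lucas' theorem does make every coefficient in \eqref{E:Sd(s)} nonzero, so the theorem reduces to showing that $\sum_i i\,m_i$ attains its minimum on $U_d(-s)$ uniquely at the lex-largest tuple. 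You have also correctly identified that an exchange argument is the right shape of proof, and the observation that the first disagreement index $j$ satisfies $m'_j<m_j$ with the missing $p$-powers scattered at indices $>j$ is sound.

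The gap is that your final step is an assertion, not a proof. The claim that "a legal bundle always exists" whenever $m'$ is non-greedy is precisely the content of the theorem, and it is not evident. For $q=p$ the exchange is manageable because $q$-evenness reduces to a condition on $\#\cP(n)$, which is essentially Diaz-Vargas' proof and also how this paper handles the $q=p$ special case of its own Theorem \ref{T:modest}. For $q=p^f$ with $f>1$, a single $p$-power generally cannot be transported without destroying $q$-evenness, positivity, or the no-carry condition at one endpoint, and one must draw compensating digits from several source positions at once; arranging this so that every affected coordinate simultaneously remains admissible is exactly what occupied Sheats for an entire paper and why Carlitz's claim stood open for fifty years. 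Invoking "the combined rigidity ... should guarantee" does not close this. Compare the machinery the paper deploys in \cref{subsec:general} to prove its own closely related Theorem \ref{T:modest} in general: matrix representations $\Gamma(\bld{X})$, $\tau$-monotonicity, the sets $I_m$ and $J_m$ from Proposition \ref{P:IJ}, and the inductive $\bar{\phi}_j$ construction. Either that level of detail, or a precise reduction to a proved statement in \cite{Sheats-1998}, is required; as written your argument stops exactly where the difficulty begins.
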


B\"{o}eckle pointed out that with some results in \cite{Sheats-1998}, one gets a more straightforward criterion when $S_d(s)$ vanishes. 
\begin{defn}
  For $k \in \ZZ_+$ with base $q$ expansion $k=a_0 + a_1 q + \cdots + a_n q^n$, let $l(k)=\sum a_i$ be the sum of base $q$ digits of $k$. Recall that $q=p^f$. Define
  \[
  L_k:= \min_{i=0,\ldots,f-1} \left\{ \frac{l(kp^i)}{(q-1)} \right\}.
  \]
  We note that since $k \equiv l(k) \smod{q-1}$, $L_k$ is an integer if and only if $(q-1)|k$, i.e., $k$ is $q$-even.
\end{defn}

\begin{prop}[{\cite[Thm. 1.2(a)]{Bockle-2013}}]
\label{P:vanishcondition}
  For $s$ negative, $S_d(s) = 0 \Leftrightarrow d > L_{-s}$.
\end{prop}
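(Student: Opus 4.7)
The plan is to use Theorem~\ref{T:greedy} to translate the proposition into the purely combinatorial statement
\[
U_d(k) \neq \emptyset \iff d \leq L_k, \qquad k := -s > 0,
\]
and then prove this in two directions. For ``$U_d(k) \neq \emptyset \Rightarrow d \leq L_k$'', I would take $(m_0, \dots, m_d) \in U_d(k)$ and, for each $0 \leq i < f$, multiply the no-carry decomposition $k = \bigoplus_{j=0}^d m_j$ through by $p^i$. The shifted relation $kp^i = \bigoplus_j m_j p^i$ still has no carry in base $p$, and since a base-$q$ digit is just a bundle of $f$ consecutive base-$p$ digits, no-carry in base $p$ forces no-carry in base $q$ as well. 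Therefore $l(kp^i) = \sum_{j=0}^d l(m_j p^i)$. For $j \geq 1$, $m_j$ is positive and $q$-even, and since $\gcd(p, q-1) = 1$ so is $m_j p^i$; hence $l(m_j p^i)$ is a positive multiple of $q-1$, so $\geq q-1$. Summing gives $l(kp^i) \geq d(q-1)$ for every $i$, i.e.\ $L_k \geq d$.

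For the converse direction I would induct on $d$; the base case $d = 0$ is trivial since $(k) \in U_0(k)$. For the inductive step, assuming $d+1 \leq L_k$, I would look for a positive $q$-even integer $m$ with $\cP(m) \subseteq \cP(k)$ and $l(mp^i) = q-1$ for every $0 \leq i < f$. Given such an $m$, set $k' := k - m$, so $\cP(k') = \cP(k) \setminus \cP(m)$ and consequently $l(k'p^i) = l(kp^i) - (q-1)$ for all $i$, i.e.\ $L_{k'} = L_k - 1 \geq d$. The inductive hypothesis then yields $(m_0, \dots, m_d) \in U_d(k')$, and appending $m$ produces an element of $U_{d+1}(k)$.

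The main obstacle is producing this ``minimal $q$-even piece'' $m$: a submultiset of $\cP(k)$ summing to a positive multiple of $q-1$ whose digit-sum contribution is exactly $q-1$ in every shift by $p^i$. This is a purely combinatorial assertion about $\cP(k)$ and is not a formal consequence of Theorem~\ref{T:greedy}, which only describes the lexicographically greedy decomposition. Its proof draws on the internal structure lemmas of \cite{Sheats-1998} that characterize, in terms of the profile $\{l(kp^i)\}_{i=0}^{f-1}$, how many disjoint positive $q$-even submultisets can be carved out of $\cP(k)$; B\"ockle's derivation of the proposition is in essence the extraction of this content.
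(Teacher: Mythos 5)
Your forward implication ($U_d(k)\neq\emptyset\Rightarrow d\le L_k$) is correct, and in fact more self-contained than the paper's treatment: multiplying the no-carry decomposition by $p^i$, observing that no carrying in base $p$ forces no carrying in base $q$, and summing the digit contributions of the $q$-even parts does give $l(kp^i)\ge d(q-1)$ for every $i$. The paper gets both implications at once from a single cited lemma of Sheats, so this half of your argument is a genuine (minor) improvement in self-containedness.

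The converse is where there is a real gap. The ``minimal $q$-even piece'' you postulate --- a positive $q$-even $m$ with $\cP(m)\subseteq\cP(k)$ and $l(mp^i)=q-1$ for \emph{every} $0\le i<f$ --- need not exist even when $d+1\le L_k$. In the paper's later notation, $l(mp^i)=q-1$ for all $i$ is equivalent to $E^{-1}\Gamma(m)=\bar{1}$, i.e.\ $\Gamma(m)=E\bar{1}=(p-1)\bar{1}$, so $m$ must contribute digit sum exactly $p-1$ to each residue class of exponents mod $f$; this requires $\Gamma(k)\ge(p-1)\bar{1}$ componentwise, which $L_k\ge d+1$ does not guarantee. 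Concretely, take $q=9$ and $k=104=11212_3$ with the last digit removed, i.e.\ $k=104$ itself: $\Gamma(104)=[5,1]^t$ and $L_{104}=\min(8,16)/8=1$, so your induction must produce an element of $U_1(104)$ (and $(0,104)$ is one), yet any admissible $m$ would need $\Gamma(m)=[2,2]^t\not\le[5,1]^t$. What your induction actually needs is the weaker assertion that some positive $q$-even $m$ with $\cP(m)\subseteq\cP(k)$ satisfies $l(mp^i)\le l(kp^i)-d(q-1)$ for every $i$; that is the genuinely hard combinatorial content (it is what Lemma~\ref{L:main lemma} of the paper supplies in vector form), and you neither prove it nor cite a precise statement implying it. The paper avoids the construction altogether: it quotes $V_d(k)=\emptyset\Leftrightarrow d\ge L_k$ verbatim from \cite[Prop.~4.3(a)]{Sheats-1998}, decomposes $U_d(k)$ into $V_d(k)$ and (when $k$ is $q$-even) a copy of $V_{d-1}(k)$, and converts $d\ge L_k$ or $d-1\ge L_k$ into $d>L_k$ using that $L_k\in\ZZ$ exactly when $(q-1)\mid k$. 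To repair your proof you should either reprove Sheats' splitting result or replace your existence claim with a correct, precisely cited one; as stated, the key step is false.
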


For reader's convenience, we provide a proof of the result above. For $d \geq 0$ and $k > 0$, let
\[
V_d(k):=\{ (m_0, \ldots, m_d) \in U_d(k) : m_0 >0 \}.
\]
The proposition follows from the following lemma of Sheats. We note that the notations and expression of the lemma are slightly different from those in Sheats' paper, but one can check that they are equivalent.

\begin{lemma}[{\cite[Prop. 4.3(a)]{Sheats-1998}}] \label{L:vanishing of V_d(k)}
  $V_d(k) = \emptyset \Leftrightarrow d \geq L_k.$
\end{lemma}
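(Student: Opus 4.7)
The plan is to prove the biconditional in two directions. The forward direction $V_d(k)\neq\emptyset\Rightarrow d<L_k$ follows from a direct digit-counting argument: given any witness $(m_0,\ldots,m_d)\in V_d(k)$, multiplication by $p^i$ for $0\le i\le f-1$ is a shift of base-$p$ digits, so it preserves positivity and the no-carry base-$p$ sum relation, and since $\gcd(p,q-1)=1$ it also preserves $q$-evenness. Hence $(m_0p^i,\ldots,m_dp^i)\in V_d(kp^i)$ for every $i$. A no-carry sum in base $p$ is automatically no-carry in base $q=p^f$, since within each block of $f$ consecutive base-$p$ digit positions the grouped digit sum remains below $q$; consequently $l(kp^i)=\sum_j l(m_jp^i)$. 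Using $l(m_0p^i)\ge 1$ together with the fact that a positive $q$-even integer has base-$q$ digit sum a positive multiple of $q-1$ (giving $l(m_jp^i)\ge q-1$ for $j\ge 1$), we obtain $l(kp^i)\ge d(q-1)+1$ for every $i$, and hence $L_k>d$.

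For the substantive converse $d<L_k\Rightarrow V_d(k)\neq\emptyset$, I would induct on $d$. The base case $d=0$ is immediate: $L_k>0$ forces $k>0$, so $(k)\in V_0(k)$. For $d\ge 1$ I would find a positive $q$-even integer $m$ with $\cP(m)\subseteq\cP(k)$ such that $L_{k'}>d-1$ where $k':=k-m$ (digit-wise removal). The inductive hypothesis applied to $k'$ at depth $d-1$ then produces $(m_0',\ldots,m_{d-1}')\in V_{d-1}(k')$, and appending $m$ yields the required element of $V_d(k)$. Writing $l(k'p^i)=l(kp^i)-l(mp^i)$ and using the hypothesis $l(kp^i)\ge d(q-1)+1$, the inductive condition $L_{k'}>d-1$ translates to the requirement $l(mp^i)\le l(kp^i)-(d-1)(q-1)-1$ for every $i$; in the tight case $l(kp^i)=d(q-1)+1$ this pins $l(mp^i)=q-1$, the minimum possible value for positive $q$-even $m$.

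The main obstacle is producing such an $m$ inside $\cP(k)$. Natural candidates with $l(mp^i)=q-1$ identically in $i$ are those $m$ whose base-$p$ digits sum to $p-1$ in every residue class modulo $f$; the simplest is $m=(q-1)p^j$. Whether a balanced sub-multiset of $\cP(k)$ exists is controlled by the residue-class totals $D_j(k):=\sum_{n:\,n\bmod f=j} b_n$, where $b_n$ are the base-$p$ digits of $k$, via the cyclic-shift identity $l(kp^i)=\sum_j p^j D_{(j-i)\bmod f}(k)$. The hypothesis $d<L_k$ forces $\sum_j p^j D_{(j-i)\bmod f}(k)\ge d(q-1)+1$ for every $i$, which I expect to force the $D_j$'s to be sufficiently spread to guarantee the extraction. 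When the digits are too concentrated for a uniformly balanced $m$ to exist, one must instead exploit the $i$-dependent slack $l(kp^i)-d(q-1)-1\ge 0$ and choose an unbalanced $m$ whose larger values of $l(mp^i)$ correlate with the larger values of $l(kp^i)$; making this extraction argument precise is the technical crux of the proof.
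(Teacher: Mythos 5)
The paper does not prove this lemma itself; it cites it to Sheats \cite[Prop.~4.3(a)]{Sheats-1998} and develops the needed linear-algebra framework (the matrix $E$, the vectors $\bar\psi_i$, the set $\fJ$, and the cone description of $I_m$) only later, in \cref{subsub:setup}--\cref{subsub:IJ}. Your easy direction ($V_d(k)\neq\emptyset\Rightarrow d<L_k$) is correct and cleanly argued: multiplication by $p^i$ is a base-$p$ digit shift that preserves membership in $V_d(\cdot)$, the base-$q$ digit sum $l$ is additive over carry-free base-$p$ sums, and a positive $q$-even integer has $l\ge q-1$; so $l(kp^i)\ge d(q-1)+1$ for every $i$, hence $L_k>d$.

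The converse, though, is where the lemma has content, and you leave a genuine gap there that you yourself flag. You correctly reduce the inductive step to extracting a positive $q$-even $m$ with $\cP(m)\subseteq\cP(k)$ and $l(mp^i)\le l(kp^i)-(d-1)(q-1)-1$ for all $i$, but you do not prove such an $m$ exists; you only observe that the uniformly balanced candidates $m=(q-1)p^j$ require $p-1$ in $f$ consecutive base-$p$ positions of $k$, which need not hold, and that otherwise ``one must exploit the $i$-dependent slack,'' which is precisely the unfinished combinatorial core. Translated into the paper's coordinates, you must produce $\bar{w}\in\fJ$ with $\bar{0}<\bar{w}\le\Gamma(k)$ and $\Gamma(k)-\bar{w}\in I_d$; this is nontrivial because the integrality constraint $E^{-1}\bar{w}\in\ZZ^f$ and the componentwise bounds interact around the cyclic index. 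The paper handles exactly this kind of construction in Lemma~\ref{L:main lemma} by a greedy, wrap-around choice of $\bar\gamma=E^{-1}\bar{w}$ starting at a tight index and propagating via $\gamma_i=\min(\floor{\beta_i}-k,\,p\gamma_{i+1}-v_i)$. Without an argument at that level of detail (or an elementary equivalent), your proof of $V_d(k)=\emptyset\Rightarrow d\ge L_k$ is incomplete.
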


\begin{proof}[Proof of Proposition~\ref{P:vanishcondition}:]
  By Theorem \ref{T:greedy}, it is enough to show that $U_d(k) = \emptyset$ iff $d > L_k$. We break it up into two cases.

  If $k$ is $q$-even, $U_d(k)=V_d(k) \cup \{ (0, m_1, \ldots, m_d)\ \big|\ (m_1, \ldots, m_d) \in V_{d-1}(k) \}$. $U_d(k)=\emptyset$ iff $V_d(k)=V_{d-1}(k)=\emptyset$, i.e., $d-1 \geq L_k$ by Lemma \ref{L:vanishing of V_d(k)}. Since $L_k$ is an integer, $d-1 \geq L_k$ $ \Leftrightarrow d > L_k$.

  If $k$ is $q$-odd, then $U_d(k)=V_d(k)$. Thus $U_d(k)=\emptyset$ iff $d \geq L_k$ by Lemma \ref{L:vanishing of V_d(k)}. As $L_k$ is not an integer in this case, $d \geq L_k \Leftrightarrow d > L_k$.
\end{proof}

Note that in \eqref{E:mzv}, the least $d$ appearing in $S_d(s_i)$ is $r-i$. Thus, if $r-i>L_{-s_i}$, all terms in the sum vanishes and so does the multizeta value. With this observation, we define

\begin{defn} \label{D:trivialzero}
  Let $r>1$ and $(s_1, \ldots, s_r) \in \ZZ^r_-$ such that $\zeta (s_1, \ldots, s_r)=0$. We call $(s_1, \ldots, s_r)$ a \textit{trivial zero} of $\zeta$ if there exists some $1 \leq i \leq r-1$ such that $r-i > L_{-s_i}$. Otherwise, $(s_1, \ldots, s_r)$ is a \textit{nontrivial zero}.
\end{defn}

\subsection{Existence of Nontrivial Zero.} \label{subsec:nontrivial}

We now investigate nontrivial zeros of $\zeta(\bs)$ where $s_i < 0$. The depth 1 case is completely understood by Goss \cite{Goss-1979}.

\begin{thm}[{Goss, see \cite[Sec. 5.3]{Book:Thakur-2004}}]
  For $s$ negative, $\zeta(s)=0$ if and only if $s$ is $q$-even.
\end{thm}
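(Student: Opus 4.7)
My plan is to prove both directions of Goss's theorem by elementary computations with the explicit expansion \eqref{E:Sd(s)}. Writing $s = -n$ with $n > 0$, \cref{P:vanishcondition} tells us that $\zeta(-n) = \sum_{d=0}^{\lfloor L_n\rfloor} S_d(-n)$ is a \emph{finite} sum of polynomials in $t$, so $\zeta(-n)$ can be analyzed coefficient-by-coefficient.

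For the direction ``$n$ is $q$-odd $\Rightarrow \zeta(-n) \neq 0$'', I would compute the constant term of $\zeta(-n)$ and show it equals $1$. From \eqref{E:Sd(s)}, the $t$-exponent $dm_0 + (d-1)m_1 + \cdots + m_{d-1}$ of a summand in $S_d(-n)$ vanishes precisely when $m_0 = m_1 = \cdots = m_{d-1} = 0$. For $d \geq 2$ this contradicts $m_1 > 0$, while for $d = 1$ it forces $m_1 = n$, which lies in $U_1(n)$ only when $n$ is $q$-even. Hence for $q$-odd $n$, only $S_0(-n) = 1$ contributes to the constant term, giving constant term $1 \neq 0$.

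For the direction ``$n = (q-1)k$ is $q$-even $\Rightarrow \zeta(-n) = 0$'', the plan is to evaluate the auxiliary sum
\[
T_N := \sum_{\substack{a \in A \\ \deg a < N}} a^n
\]
in two ways. First, writing each nonzero $a$ uniquely as $c \cdot a_+$ with $c \in \FF_q^*$ and $a_+ \in A_+$, and using $\sum_{c \in \FF_q^*} c^n = -1$ (which uses $(q-1) \mid n$), one obtains $T_N = -\sum_{e=0}^{N-1} S_e(-n)$. Since $L_n \leq l(n)/(q-1) \leq n/(q-1) = k$, choosing $N > k$ also ensures $N > L_n$, so $T_N = -\zeta(-n)$. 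Second, parametrizing $a = \sum_{i=0}^{N-1} c_i t^i$ and expanding by the multinomial theorem, the orthogonality $\sum_{c \in \FF_q} c^m = 0$ (unless $m > 0$ and $(q-1) \mid m$) forces every nonzero summand to have all $N$ exponents $m_i$ positive multiples of $q-1$, so $\sum m_i \geq N(q-1) > k(q-1) = n$ once $N > k$, a contradiction. Thus $T_N = 0$ for $N > k$, and combining the two computations gives $\zeta(-n) = 0$.

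I anticipate no serious obstacle beyond verifying $L_n \leq k$, which is immediate from $l(m) \leq m$ for $m \in \ZZ_+$. Both directions are standard character-sum and multinomial manipulations; the slightly surprising part is how cleanly the ``if'' direction falls out of looking at the single auxiliary sum $T_N$.
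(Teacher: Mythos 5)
Your proof is correct. Note that the paper does not prove this theorem at all---it is quoted from Goss via Thakur's book---so there is no in-paper argument to compare against; what you have written is the standard self-contained proof, and both halves check out. For the $q$-odd direction, your constant-term computation is exactly the idea the paper alludes to just after the theorem (``there is a unique term of least degree, $1$, \dots which could not be canceled''): for $d\ge 1$ the exponent $dm_0+\cdots+m_{d-1}$ vanishes only if $m_0=\cdots=m_{d-1}=0$, which is ruled out by the conditions on $U_d(n)$ when $n$ is $q$-odd, so the constant coefficient of $\zeta(-n)$ is the $1$ coming from $S_0(-n)$. For the $q$-even direction, the double evaluation of $T_N$ is the classical character-sum argument and is sound: the orthogonality relation $\sum_{c\in\FF_q}c^m=0$ unless $m>0$ is $q$-even (including the case $m=0$, where the sum is $q=0$) forces every surviving multinomial term to satisfy $\sum m_i\ge N(q-1)>n$. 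One small remark: you do not actually need Proposition \ref{P:vanishcondition} (hence Sheats' machinery) to truncate the sum; the elementary observation that each $m_i>0$ $q$-even contributes at least $q-1$ to $n$ already gives $S_d(-n)=0$ for $d>n/(q-1)$, which keeps the proof entirely self-contained and avoids any appearance of circularity.
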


Note that multizeta values in this case reduce to Carlitz zeta values. The above theorem shows that the behavior of zeros of Carlitz zeta at negative integers is analogous to that of the trivial zeros of the classical Riemann zeta function. However, unlike a direct implication from the functional equation of the Riemann zeta, the vanishing of $\zeta(s)$, without any known functional equations in the function field case, follows from cancellations among monomials. 

The proof of the nonvanishing of $\zeta(s)$ at $q$-odd $s$ \cite[Thm. 5.3.2]{Book:Thakur-2004} showed that there is a unique term of least degree, 1, in the polynomial sum of $\zeta(s)$, which could not be canceled. Similarly, the fact that multizeta values at positive integers never vanish \cite[Thm. 4]{Thakur-powersums-2009} follows from the strict monotonicity in $d$ of the $\infty$-valuation of $S_d(\bs)$. We use the same strategy to show that there is no nontrivial zeros in higher depth case.

\begin{defn}
  $M=(M_0, \ldots, M_d) \in U_d(k)$ is called \textit{modest} if $(M_d, M_{d-1}, \ldots, M_0)$ is lexicographically the largest, i.e., $M_d \geq m_d$ for all $(m_0, \ldots, m_d) \in U_d(k)$, $M_{d-1} \geq m_{d-1}$ for those $(m_0, \ldots, m_d)$ with $m_d=M_d$ and so on. Such element always exists and is unique if $U_d(k) \neq \emptyset$.
\end{defn}

Our main result is the following theorem, which characterises the term in $S_d(s)$ with least degree. Its proof is given in \cref{sec:proof}.

\begin{thm}\label{T:modest}
  Assume $S_d(s) \neq 0$. The term corresponding to the modest element in $U_d(-s)$ attains the unique minimum degree in $t$ among all summands in $S_d(s)$.
\end{thm}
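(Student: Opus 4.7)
The plan is to reduce the theorem to two claims: (i) among all $(m_0, \ldots, m_d) \in U_d(-s)$, the exponent $w(m) := dm_0 + (d-1)m_1 + \cdots + m_{d-1}$ appearing in~\eqref{E:Sd(s)} is uniquely minimized at the modest element $M = (M_0, \ldots, M_d)$; and (ii) the multinomial coefficient $\binom{-s}{M_0, \ldots, M_d}$ is nonzero modulo $p$. Granted both, formula~\eqref{E:Sd(s)} shows that $t^{w(M)}$ appears in $S_d(s)$ with the nonzero coefficient $(-1)^d \binom{-s}{M_0, \ldots, M_d}$, while no other summand reaches this degree, proving the theorem. Claim (ii) follows at once from Lucas' theorem combined with the defining relation $-s = \bigoplus_{i=0}^d M_i$ of $U_d(-s)$.

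The substance lies in claim (i), which I would prove by an exchange-style descent argument in the spirit of Sheats' proof of Theorem~\ref{T:greedy}. There is an appealing duality between the two problems: the greedy element maximizes $w$ by loading mass into low-index, high-weight positions, while the modest element should minimize $w$ by loading mass into high-index, low-weight positions. One cannot, however, simply cite Sheats' theorem as a black box: the only coordinate of $U_d(-s)$ that is free of the positive $q$-even constraint is index $0$, whereas the reflected picture would place the free coordinate at index $d$. The argument must therefore be re-run with the roles of the two endpoints exchanged.

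Concretely, assume $n \in U_d(-s)$ with $n \neq M$, let $i$ be the largest index at which $n_i \neq M_i$, and observe that modesty of $M$ forces $M_i > n_i$. Since $n_j = M_j$ for all $j > i$, the multisets $\bigsqcup_{j \leq i}\cP(n_j)$ and $\bigsqcup_{j \leq i}\cP(M_j)$ coincide. The aim is then to construct $n' \in U_d(-s)$ with $n'_j = n_j$ for $j > i$, with $n_i < n'_i \leq M_i$, and with each intermediate $n'_j$ (for $0 < j < i$) still positive and $q$-even. Any such transfer moves mass from positions of weight $d-j > d-i$ to position $i$ of weight $d-i$, so $w(n') < w(n)$; iterating reaches $M$ while strictly decreasing $w$ at every step, which yields $w(n) > w(M)$.

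The main obstacle is the construction of this swap. Because the positive $q$-even constraint at each intermediate coordinate prevents moving a single $p$-power, one must isolate a suitable $q$-even aggregate of $p$-powers within $\bigsqcup_{j < i}\cP(n_j)$ whose removal leaves each $n'_j$ positive and $q$-even, and whose insertion at position $i$ respects the $\bigoplus$ relation there. Identifying such a block is the technical heart of the argument and is where Sheats' base-$p$ digit bookkeeping must be carefully transcribed to the modest setting (in particular, to the fact that the position being maximized, namely $i$, carries the positive $q$-even constraint rather than being the unconstrained index $0$ of the greedy setup). Once this swap lemma is in place, the descent argument above completes claim (i), and therefore the theorem.
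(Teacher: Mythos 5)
Your reduction to claims (i) and (ii) is clean, and (ii) is correctly dispatched by Lucas' theorem since the summation indices in~\eqref{E:Sd(s)} are exactly those with no carry over in base $p$, so the corresponding multinomial coefficients are automatically nonzero mod $p$. But (i) is the entire theorem, and the ``swap lemma'' you flag but leave unproved is where all the content lives. The descent you outline --- find the largest index $i$ with $n_i < M_i$ and move a $q$-even aggregate from positions $j < i$ to position $i$ --- is exactly what the paper carries out, but \emph{only} when $q = p$ is prime (\cref{subsec:special}, case (1)), because there the $q$-even condition reduces to $(q-1)\mid\#\cP(m_i)$ and one can move either a single $p$-power or a block of $q-1$ of them.

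For $q = p^f$ with $f > 1$ the paper does not prove a local swap of this kind at all; its general case (\cref{subsec:general}) proceeds by contradiction in a structurally different way. It fixes the least $d$ where the theorem fails, takes \emph{both} the modest $\bld{M}$ and an optimal $\bld{O}$, and uses the pair --- in particular the largest $p$-power $p^a \in \cP(M_1)\setminus\cP(O_1)$ --- to build a fresh composition $\bld{Z}$ of strictly smaller weight than $\bld{O}$, via the linear-algebraic machinery ($E$, $\Gamma$, $\bar{\theta}_j$, $\bar{\phi}_j$) inherited from Sheats. This construction modifies all coordinates simultaneously in a cyclic order and leans on structural estimates (Propositions~\ref{P:estimation on M/O} and~\ref{P:weight estimation for modest/optimal elt}, Lemmas~\ref{L:main lemma} and~\ref{L:main lemma 2}) with no counterpart in your outline. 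So ``carefully transcribing Sheats' bookkeeping'' understates what is required: what is being transcribed is not a swap lemma but a global proof by contradiction, and it needs both extremal compositions, not just the one non-modest tuple you start from. Separately, your remark that Sheats cannot be invoked as a black box is only right when $-s$ is $q$-odd: when $-s$ is $q$-even, the paper's case (2) shows that reversing the composition carries the modest problem directly to the greedy one, so there Sheats does apply verbatim.
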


Recall that for $d \leq L_{-s}$, elements in $U_d(-s)$ and summands in (\ref{E:Sd(s)}) are in one-to-one correspondence. Take $(m_0, m_1, \ldots, m_d) \in U_d(-s)$, then its corresponding term in $S_d(s)$ has degree $dm_0+(d-1)m_1+\cdots+m_{d-1}$. Define $\nu_d(s) := v_t(S_d(s))$, where $v_t$ is the $t$-valuation. We have the following corollary.

\begin{cor} \label{C:valuation monotonicity}
  Fix $s < 0$, then
  \[
  \nu_{\lfloor L_{-s} \rfloor}(s) > \nu_{\lfloor L_{-s} \rfloor-1}(s) > \cdots > \nu_1(s) \geq \nu_0(s).
  \]
\end{cor}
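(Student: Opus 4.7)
The plan is to reduce the corollary to a single coordinate-merge applied to the modest element. By Proposition \ref{P:vanishcondition} together with Theorem \ref{T:greedy}, $U_d(-s)$ is nonempty for every $0 \leq d \leq \floor{L_{-s}}$; and by Theorem \ref{T:modest},
\[
\nu_d(s) = dM_0 + (d-1)M_1 + \cdots + M_{d-1},
\]
where $M = (M_0, \ldots, M_d)$ denotes the modest element of $U_d(-s)$.

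The main step is the case $d \geq 2$. I would merge the coordinates $M_1$ and $M_2$ of $M$, producing
\[
N := (M_0,\ M_1 + M_2,\ M_3,\ \ldots,\ M_d).
\]
Checking $N \in U_{d-1}(-s)$ is routine: the inclusion $\cP(M_1) \sqcup \cP(M_2) \subseteq \cP(-s)$ forces $M_1+M_2$ to be carry-free with $\cP(M_1+M_2) = \cP(M_1) \sqcup \cP(M_2)$, positivity is preserved, and $q$-evenness is closed under carry-free addition. A one-line computation shows that the monomial in $S_{d-1}(s)$ indexed by $N$ has degree $\nu_d(s) - (M_0 + M_1)$. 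Since $M_1$ is positive $q$-even we have $M_1 \geq q-1 > 0$, so Theorem \ref{T:modest} applied in depth $d-1$ yields
\[
\nu_{d-1}(s) \leq \nu_d(s) - (M_0 + M_1) < \nu_d(s),
\]
as required.

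The base case $d = 1$ is immediate: $S_0(s) = 1$ forces $\nu_0(s) = 0$, whereas $\nu_1(s) = M_0 \geq 0$ for the modest $(M_0, M_1) \in U_1(-s)$, giving $\nu_1(s) \geq \nu_0(s)$ directly.

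I do not foresee a genuine obstacle here. Once Theorem \ref{T:modest} is in hand, the argument is a single combinatorial move; the only items to double-check are that $N$ really lies in $U_{d-1}(-s)$ and that the degree bookkeeping after the index shift $(M_0, M_1+M_2, M_3, \ldots, M_d) \mapsto (N_0, N_1, \ldots, N_{d-1})$ cleanly produces the $M_0 + M_1$ drop.
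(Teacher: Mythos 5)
Your proof is correct and follows essentially the same route as the paper: collapse two adjacent coordinates of the modest element to produce an element of $U_{d-1}(-s)$, then use Theorem \ref{T:modest} to bound $\nu_{d-1}(s)$ by that element's degree. The only (immaterial) difference is that you merge $M_1$ and $M_2$, giving a weight drop of $M_0+M_1$, whereas the paper merges $M_{d-1}$ and $M_d$, giving a drop of $M_0+\cdots+M_{d-1}$; both are strictly positive for $d\geq 2$ since the inner coordinates are positive $q$-even integers.
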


\begin{proof}
  Since $\nu_0(s)=v_t(1)=0$ for all $s$, the last inequality is obvious. Assume $0 < d \leq L_{-s}$ and let $\bld{M}=(M_0, \ldots, M_d)$ be the modest element in $U_d(-s)$, then Theorem \ref{T:modest} implies $\nu_d(s)=dM_0+(d-1)M_1+\cdots+M_{d-1}$. Consider $\bld{N}=(M_0, \ldots, M_{d-2}, M_{d-1}+M_d)$, then $\bld{N} \in U_{d-1}(-s)$ and thus $\nu_{d-1}(s) \leq (d-1)M_0+(d-2)M_1+\cdots+M_{d-2} \leq \nu_d(s)$, where the second inequality is equality iff $d=1$ and $M_d=-s$.
\end{proof}

With this result, we finish the discussion of the vanishing of multizeta values of higher depth at negative integers.

\begin{thm} \label{T:vanishing at negative}
  For $\bs=(s_1, \ldots, s_r)$ with $s_i < 0$ and $r > 1$, $\zeta(\bs)=0$ if and only if $\bs$ is a trivial zero.
\end{thm}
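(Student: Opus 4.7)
The plan is to control the $t$-valuation of each summand in the finite sum defining $\zeta(\bs)$, using Theorem~\ref{T:modest} to pin down the leading (lowest-$t$-degree) monomial of each $S_d(s)$. The easy direction is immediate: if $\bs$ is a trivial zero with $r - i > L_{-s_i}$ for some $1 \leq i \leq r-1$, then any admissible tuple $d_1 > \cdots > d_r \geq 0$ in \eqref{E:mzv} satisfies $d_i \geq r-i > L_{-s_i}$, so Proposition~\ref{P:vanishcondition} kills $S_{d_i}(s_i)$ and every term vanishes.

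For the converse, assume $r - i \leq L_{-s_i}$ for every $1 \leq i \leq r-1$. I would argue that the single tuple $(d_1,\ldots,d_r) = (r-1, r-2, \ldots, 0)$ contributes a monomial to $\zeta(\bs)$ of strictly smaller $t$-degree than any other admissible tuple, and that this monomial has a nonzero $\FF_q$-coefficient. The nonzeroness is clear: each $S_{r-i}(s_i)$ is nonzero by Proposition~\ref{P:vanishcondition}, its unique lowest-$t$-degree term is the modest-element monomial from Theorem~\ref{T:modest}, and the associated coefficient $(-1)^{r-i}\binom{-s_i}{M_0,\ldots,M_{r-i}}$ is nonzero mod $p$ by Lucas' theorem. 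Multiplying these leading monomials over $i$ yields a nonzero coefficient of $t^{\sum_i \nu_{r-i}(s_i)}$ in the corresponding summand of $\zeta(\bs)$.

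The uniqueness of the minimising tuple is the technical core. Parametrise admissible tuples by $\delta_i := d_i - (r-i)$, which translates $d_1 > \cdots > d_r \geq 0$ into the weak decrease $\delta_1 \geq \delta_2 \geq \cdots \geq \delta_r \geq 0$. I must show $\sum_i \nu_{d_i}(s_i) > \sum_i \nu_{r-i}(s_i)$ whenever some $\delta_i > 0$. By Corollary~\ref{C:valuation monotonicity}, each difference $\nu_{d_i}(s_i) - \nu_{r-i}(s_i)$ is non-negative, and is strictly positive whenever $\delta_i > 0$ and $r - i \geq 1$, since then the jump lies within the strictly increasing range of $\nu$. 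The only configuration that needs separate attention is when the positive $\delta_i$ occur only at $i = r$; but then the weak decrease forces $\delta_{r-1} \geq \delta_r \geq 1$, so $d_{r-1} \geq 2$, and the strict gap $\nu_2(s_{r-1}) > \nu_1(s_{r-1})$ from the corollary supplies the required strict inequality at index $r-1$. This edge case, where Corollary~\ref{C:valuation monotonicity} allows the non-strict step $\nu_0 = \nu_1$, is the only delicate point of the argument; once it is handled, uniqueness of the minimal $t$-degree monomial forces $\zeta(\bs) \neq 0$.
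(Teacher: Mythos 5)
Your proposal is correct and follows essentially the same route as the paper: both reduce the claim to showing that the single tuple $(d_1,\ldots,d_r)=(r-1,\ldots,0)$ attains the strictly minimal $t$-valuation among all summands of $\zeta(\bs)$, using Corollary~\ref{C:valuation monotonicity}, and both handle the only delicate case (the non-strict step $\nu_1\geq\nu_0$) by observing that $d_r>0$ forces $d_{r-1}\geq 2$, so a strict gap already occurs at an index $j<r$. The conclusion then follows from the ultrametric (strict triangle) inequality exactly as in the paper.
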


\begin{proof}
  It is equivalent to show that $\zeta(\bs) \neq 0$ if $\bs$ is not a trivial zero. In this case, the sum $\zeta(\bs)=\sum_{d_1 > \cdots > d_r \geq 0} S_{d_1}(s_1) \cdots S_{d_r}(s_r)$ is nonempty. In particular, $S_{r-1}(s_1) \cdots S_0(s_r) \neq 0$ and 
  \[v_t(S_{r-1}(s_1) \cdots S_0(s_r))=\sum_{i=1}^r \nu_{r-i}(s_i).
  \]
  For any other term $S_{d_1}(s_1) \cdots S_{d_r}(s_r)$ in the sum, $d_i \geq r-i$ for all $i$ and there exist some $j$ such that $d_j > r-j > 0$. Thus, by Corollary \ref{C:valuation monotonicity},
  \[
  v_t(S_{d_1}(s_1) \cdots S_{d_r}(s_r))=\sum_{i=1}^r \nu_{d_i}(s_i) > v_t(S_{r-1}(s_1) \cdots S_0(s_r)).
  \]
  By strict triangle inequality, $v_t(\zeta(\bs))=v_t(S_{r-1}(s_1) \cdots S_0(s_r))=\sum_{i=1}^r \nu_{r-i}(s_i)$. In particular, $\zeta(\bs) \neq 0$.
\end{proof}

\begin{rmk}
  We note that the same strategy fails in analysing the vanishing of $\zeta(\bs)$ at integers of mixed signs. For both place $t$ and $\infty$, $s$ being positive and negative give opposite monotonicity of the valuation of $S_d(s)$ in $d$. Hence there is no unique term with least valuation in general. For example, let $q=3$, 
  \begin{align*}
      \zeta(-8, 2) &= S_1(-8)S_0(2)+S_2(-8)S_0(2)+S_2(-8)S_1(2)\\
      &= (2t^6+2t^4+2t^2+2)+(t^6+t^4+t^2)+(1)=0
  \end{align*}
  is a ``nontrivial zero" in the sense of Definition \ref{D:trivialzero}. In the sum, $S_1(-8)S_0(2), S_2(-8)S_1(2)$ attain the least valuation at $t$ and $S_1(-8)S_0(2), S_2(-8)S_0(2)$ attain the least valuation at $\infty$.
\end{rmk}

\section{Proof of Theorem \ref{T:modest}} \label{sec:proof}

The proof of Theorem \ref{T:modest} is quite complicated and combinatorial. This is because the two conditions on elements of $U_d(-s)$ are with respect to $p$ and $q$ each while $p$ and $q$ are different in general. Major difficulty of the proof arises from how to track these two conditions simultaneously. 

\subsection{Special case.} \label{subsec:special}

When $q=p$ is a prime, the problem mentioned above disappears and the theorem can be proved in a way similar to the proof of Theorem \ref{T:greedy} for $q=p$ case by Diaz-Vargas. Another simple case, without restriction on $q$, is where $s$ is $q$-even, which follows directly from the result on greedy element. We first prove these two special cases.

\begin{proof}[Proof of Theorem \ref{T:modest} for special cases]
  Let $k=-s$ and $\bld{M}=(M_0, \ldots, M_d) \in U_d(k)$ be the modest element. For $\bld{m} = (m_0, \ldots, m_d)$ $\in U_d(k)$, define
  \[
  \wt(\bld{m}):=dm_0+(d-1)m_1+\cdots+m_{d-1}
  \]
  to be its weight, which equals the degree of its corresponding term in $S_d(s)$. For both cases, we need to show $\bld{M}$ achieves the unique minimum weight.

  (1) $q=p$ is a prime: We show that given any non-modest element $\bld{m}$, one can always adjust it to get another $\bld{m'}$ of smaller weight.  Let $l > 0$ be the largest index such that $M_l > m_l$. Then $M_i=m_i$ for $i>l$ by the choice of $\bld{M}$. Recall that $\cP(n)$ is the multiset of $p$-powers represented by the base $p$ digits of $n$. When $q=p$,  $n$ is $q$-even iff $(q-1)|\#\cP(n)$. We split the discussion into two cases. 
  \begin{enumerate}
      \item[(a)] If $\#\cP(M_l) \leq \#\cP(m_l)$, then there exist some $p^e \in \cP(m_l)$ and $p^{e'} \in \cP(M_l)\backslash\cP(m_l)$ such that $p^e < p^{e'}$. By \eqref{E:no carry over}, $p^{e'} \in \cP(m_{l'})$ for some $l' < l$. Let
      \[
      \bld{m'}=(m_0, \ldots, m_{l'}-p^{e'}+p^e, \ldots, m_l-p^e+p^{e'}, \ldots, m_d),
      \]
      then it is easy to check that $\bld{m'} \in U_d(k)$ and $\wt(\bld{m'}) < \wt(\bld{m})$.
      \item[(b)] If $\#\cP(M_l) > \#\cP(m_l)$, then $\#\cP(M_l) - \#\cP(m_l) \geq q-1$ since both $M_l$ and $m_l$ are $q$-even. Note that $\sum_{i=0}^d \#\cP(M_i)=\sum_{i=0}^d \#\cP(m_i)=\#\cP(k)$. Thus there exists $l'<l$ such that $\#\cP(m_{l'}) - \#\cP(M_{l'}) \geq q-1$. Write $\cP(m_{l'})=P_1 \sqcup P_2$, where $\#P_1 = q-1$, and this implies $m_{l'}=n_1 \oplus n_2$ with $n_1$ $q$-even. If $l'>0$, then $n_2 > 0$ and is also $q-even$. Consider
      \[
      \bld{m'}=(m_0, \ldots, m_{l'}-n_1, \ldots, m_l+n_1, \ldots, m_d),
      \]
      then $\bld{m'} \in U_d(k)$ and $\wt(\bld{m'}) < \wt(\bld{m})$.
  \end{enumerate} 

  (2) $s$ is $q$-even: Recall that
  \[
  V_d(k)=\{(m_0, \ldots, m_d) \in U_d(k) : m_0 > 0 \}.
  \]
  In this case, $\bld{M} \in U_d(k) \setminus V_d(k)$ since otherwise $(0, M_1, \ldots, M_d+M_0)$ is also contained in $U_d(k)$ whose reverse is lexicographically larger. Similar argument shows that $\bld{m} \in U_d(k) \setminus V_d(k)$ if $\bld{m}$ is of minimum weight. Consider the bijective map 
  \[
  \varphi: (0, m_1, \ldots, m_d) \mapsto (m_d, \ldots, m_1)
  \]
  between $U_d(k) \setminus V_d(k)$ and $V_{d-1}(k)$. Note that $k=\sum_i m_i$, thus, for $\bld{m} \in U_d(k) \setminus V_d(k)$,
  \[
  \wt(\bld{m})=(d-1)m_1+\cdots+m_{d-1}=(d-1)k-\wt(\varphi(\bld{m})).
  \]
  $\wt(\bld{m})$ being minimum indicates that $\varphi(\bld{m})=(m_d, \ldots, m_1)$ achieves the largest weight in $V_{d-1}(k)$. By Theorem \ref{T:greedy}, $\varphi(\bld{m})$ has to be the greedy element in $U_{d-1}(k)$. This implies that the reverse of $\bld{m}$ is lexicographically the largest in $U_d(k) \setminus V_d(k)$, hence $\bld{m}=\bld{M}$.
\end{proof}

\subsection{General case.} \label{subsec:general}

Our proof for general case is inspired by Sheats' proof \cite{Sheats-1998} of Theorem \ref{T:greedy} on greedy element. We prove by contradiction. Roughly speaking, assuming there exists a tuple not modest in $U_d(-s)$ gives a term of lowest degree in $S_d(s)$, we construct another term with smaller degree.

We fix a prime power $q=p^f$. In this section, $\bar{x}$ denotes a column vector of length $f$, where $x$ is either an English or Greek letter, with or without subscript. If not mentioning explicitly, its entires are denoted as $x_i$ with $0 \leq i < f$, e.g., $\bar{u}=[u_0, u_1, \ldots, u_{f-1}]^t$. Note that the subscripts start from $0$. The zero vector is denoted as $\bar{0}$.

\subsubsection{Set up and preliminaries.} \label{subsub:setup}

Before the proof, we change to a different notation for easy expression. A $d$-tuple $(X_1, \ldots, X_d) \in \NN^d$ is said to be a \textit{composition} of $N$ if $N=\sum_{i=1}^d X_i$. For $d > 0$ and $ N \in \ZZ_+$, let
\begin{align*}
  W_d(N) &= \{ (X_1, X_2, \ldots, X_d) \in \NN^d : (X_d, X_{d-1}, \ldots, X_1) \in U_{d-1}(N) \}\\
  &= \{ (X_1, X_2, \ldots, X_d) \in \NN^d : N = \bigoplus_{i=1}^d X_i, \text{ and } X_i > 0 \text{ is $q$-even for } i < d \}.
\end{align*}
In this new set up, the modest element in $U_{d-1}(N)$ corresponds to be the lexicographically largest composition in $W_d(N)$, which we again call it modest.

\begin{defn}
  Let $\bld{X}=(X_1, \ldots, X_d) \in W_d(N)$. Define its \textit{weight}, denoted as $\wt(\bld{X})$, by
  \[
  \wt(\bld{X})=X_1+2X_2+\cdots+dX_d.
  \]
  Any composition $\bld{X}$ achieving the minimum weight in $W_d(N)$ is called \textit{optimal}. 
\end{defn}

One can check that Theorem \ref{T:modest} is equivalent to the following.

\begin{thm}\label{T:modest'} 
  For $W_d(N) \neq \emptyset$, the modest composition is the only optimal composition. 
\end{thm}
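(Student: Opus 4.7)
My plan is to prove Theorem~\ref{T:modest'} by contradiction combined with an exchange argument, modelled on Sheats's treatment~\cite{Sheats-1998} of the greedy element in Theorem~\ref{T:greedy}. Suppose $\bld{X}\in W_d(N)$ is optimal but distinct from the modest composition $\bld{M}$. Let $l$ be the smallest index with $X_l\neq M_l$; since $\bld{M}$ is lex-largest and $X_i=M_i$ for $i<l$, we have $X_l<M_l$. Because both compositions are $p$-carry-free and sum to $N$, the identity $\bigsqcup_{j\geq l}\cP(X_j)=\bigsqcup_{j\geq l}\cP(M_j)$ of multisets of $p$-powers forces the excess $M_l-X_l>0$ to be redistributed among the entries $X_j$ with $j>l$ in $\bld{X}$. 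My aim is to construct some $\bld{Y}\in W_d(N)$ by moving a block of $p$-powers from an entry $X_j$ (with $j>l$) down to $X_l$; any such downward move strictly decreases the weight $\sum_i iX_i$ and therefore contradicts optimality.

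\textbf{Setup and book-keeping.} To keep the $q$-even and $p$-carry-free constraints trackable simultaneously, I would adopt the column-vector formalism of Sheats~\cite{Sheats-1998}: attach to each entry $X_i$ a column vector $\bar{X}_i\in\NN^f$ recording how the $p$-powers of $\cP(X_i)$ distribute across the $f$ residue classes of their exponents modulo $f$. Because $p^{mf+k}\equiv p^k\pmod{q-1}$, the $q$-evenness of $X_i$ becomes a linear condition on $\bar{X}_i$, while the $p$-carry-free sum $N=\bigoplus_i X_i$ is a disjointness condition on the underlying multisets across all $\bar{X}_i$. In this picture, a swap of $p$-powers $p^e\leftrightarrow p^{e'}$ with $e\equiv e'\pmod f$ between two entries $X_l$ and $X_j$ automatically preserves both the $p$-carry-free condition and the $q$-evenness of the affected entries.

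\textbf{Exchange construction and main obstacle.} The construction of the improved $\bld{Y}$ will split into two cases, mirroring the special case $q=p$. In the simpler case, a single $p^e\in\cP(M_l)\setminus\cP(X_l)$ located in some $\cP(X_j)$ with $j>l$ can be swapped against a $p^{e'}\in\cP(X_l)\setminus\cP(M_l)$ satisfying $e\equiv e'\pmod f$ and $p^{e'}<p^e$; this swap is automatically $p$-carry-free and $q$-even-preserving, and the weight drops by $(j-l)(p^e-p^{e'})>0$. If no such matched pair is available (the constraint $e\equiv e'\pmod f$ being the new twist when $q>p$), one must move a $q$-even bundle of $p$-powers at once, exploiting that $M_l$ itself is $q$-even so that $\cP(M_l)\setminus\cP(X_l)$ decomposes into $q$-even chunks modulo a controllable remainder that recombines with a piece of $\cP(X_l)$. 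The hard part---and where the main work of Sheats lies---is proving that such a $q$-even bundle can always be extracted from $\bigsqcup_{j>l}\cP(X_j)$ without breaking the $q$-evenness of the donor entries $X_j$ for $j<d$, because the mod-$(q-1)$ row-sum condition couples nontrivially with column-disjointness. I plan to adapt Sheats's combinatorial lemmas on column vectors---in particular his inductive construction of an extractable $q$-even sub-block---to this weight-minimisation setting. Once a valid exchange is produced, the resulting $\bld{Y}\in W_d(N)$ satisfies $\wt(\bld{Y})<\wt(\bld{X})$, contradicting optimality of $\bld{X}$, and uniqueness of the minimum follows immediately.
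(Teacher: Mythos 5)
Your high-level frame (contradiction plus a Sheats-style rearrangement in the column-vector formalism) is the right one, but the proposal has a genuine gap: the entire mathematical content of the theorem is the \emph{existence} of a weight-decreasing rearrangement, and that is precisely the step you defer. Your ``simpler case'' (swapping $p^e\leftrightarrow p^{e'}$ with $e\equiv e'\pmod f$) only ever establishes that an optimal composition is $\tau$-monotonic, i.e.\ within each residue class mod $f$ the larger $p$-powers already sit in earlier entries; for a genuinely optimal $\bld{X}$ no such swap is available, so everything rests on the ``harder case''. There, the claim that $\cP(M_l)\setminus\cP(X_l)$ ``decomposes into $q$-even chunks modulo a controllable remainder'' that can be pulled down from later entries is not justified and, as stated, is not how a working argument goes: when $f>1$, $q$-evenness is a condition on $\sum p^{e_i}\bmod(q-1)$, not on cardinalities, and a movable $q$-even sub-bundle of some $\bigsqcup_{j>l}\cP(X_j)$ that also leaves every donor entry positive and $q$-even need not exist. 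The paper's construction is not a local bundle move at all: it rebuilds the whole tail $(Z_2,\ldots,Z_d)$ via a recursive choice of vectors $\bar\phi_j$ with $\bar z_j=E\bar\phi_j$, and the resulting weight comparison is \emph{not} of the form $(j-l)\cdot(\text{bundle size})>0$ --- the gain $p^a-p^b$ in one residue class is partially offset by losses $\sum_i\tau_{i,z_{i,2}}$ in the others and by the changes in the deeper entries, and positivity of the net difference requires the estimate of Lemma~\ref{L:diff bound} together with a separate case analysis for $p=2$. So ``any such downward move strictly decreases the weight'' is true only for moves that are never available, and the moves that are available are not pure downward moves.

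You are also missing the structural reductions that make the construction feasible and that are not cosmetic: taking a counterexample with $d$ minimal (so that in depth $d-1$ optimal equals modest, which via Proposition~\ref{P:basicprop} forces the disagreement into the first coordinate, $M_1>O_1$, rather than an arbitrary index $l$); disposing of $q$-even $N$ outright by the weight-reversing bijection with $V_{d-1}(N)$ and Sheats's greedy theorem, so that only $q$-odd $N$ with $d\geq 3$ remains; and the key nonexistence statement $W_d(N-X_1)=\emptyset$ for a modest or optimal $\bld{X}$ (Proposition~\ref{P:weight estimation for modest/optimal elt}(iii)), which is what pins down the floor values $\floor{\eta_i}-\beta_i$ in Lemma~\ref{L:main lemma 2} and hence makes the recursive construction of the $\bar\phi_j$ land in a valid matrix. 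Without these, ``adapt Sheats's combinatorial lemmas'' is a placeholder for the proof rather than a proof.
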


\begin{rmk} \label{R:trivial case}
  The theorem holds for $d=1$ trivially since $W_1(N)=\{(N)\}$ has only one composition. For $d=2$, $\wt(\bld{X})=2N-X_1$ for any $\bld{X} \in W_2(N)$ and hence the modest composition is the only optimal element.
\end{rmk}

The following proposition consists of some observations on how to get new modest or optimal compositions from old ones.

\begin{prop} \label{P:basicprop}
  Suppose $W_d(N) \neq \emptyset$. $\bld{X}=(X_1, \ldots, X_d)$ is the modest composition in $W_d(N)$. Then
  \begin{itemize}
    \item[(i)] $(X_1, X_2, \ldots, X_{d-1})$ is the modest composition in $W_{d-1}(N-X_d)$;
    \item[(ii)] $(X_2, X_3, \ldots, X_d)$ is the modest composition in $W_{d-1}(N-X_1)$;
    \item[(iii)] for any $n \geq 0$, $(p^nX_1, \ldots, p^nX_n)$ is the modest composition in $W_d(p^nN)$.
  \end{itemize}
  These three statements remain true when replacing ``the modest composition" with ``an optimal composition".
\end{prop}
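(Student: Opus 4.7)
The plan is to verify each of the three assertions directly by ``lifting'' or ``restricting'' compositions between $W_d$ and $W_{d-1}$ and tracking what this operation does to lexicographic order and weight.

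For (iii), scaling by $p^n$ is the easiest piece. Since $\cP(p^n N)$ consists of $p$-powers all of size at least $p^n$, any element of $W_d(p^n N)$ must have every part divisible by $p^n$. Hence $\bld{X} \mapsto p^n \bld{X}$ is a bijection $W_d(N) \to W_d(p^n N)$ that preserves lex order (entries scale by the same positive constant) and scales the weight uniformly by $p^n$. Both the modest and the optimal conclusions are immediate.

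For (ii), I would directly prepend $X_1$: given $(Y_2, \ldots, Y_d) \in W_{d-1}(N - X_1)$, I claim $(X_1, Y_2, \ldots, Y_d) \in W_d(N)$. Indeed, $X_1 > 0$ is $q$-even (it sits in a non-terminal slot of $\bld{X}$), the $Y_i$ for $2 \le i \le d-1$ are positive $q$-even in both $W_{d-1}(N - X_1)$ and $W_d(N)$, and the $\cP$-multisets combine disjointly because $\cP(X_1)$ and $\cP(N - X_1)$ are disjoint inside $\cP(N)$. A short calculation gives $\wt(X_1, Y_2, \ldots, Y_d) = \wt(Y_2, \ldots, Y_d) + N$, so minimizing weight (or maximizing lex order after the shared first coordinate) on $X_1$-starting elements of $W_d(N)$ is literally the same problem as on $W_{d-1}(N - X_1)$. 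Applied to any lex-larger, resp.\ lower-weight, alternative this forces a contradiction with the hypothesis on $\bld{X}$.

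For (i), the analogous append is more delicate, and this is the step I expect to be the main obstacle. Given $(Y_1, \ldots, Y_{d-1}) \in W_{d-1}(N - X_d)$ that beats $(X_1, \ldots, X_{d-1})$ lexicographically (or in weight), I would attach $X_d$ at the end to form a candidate in $W_d(N)$. The catch is that $W_{d-1}(N - X_d)$ only constrains $Y_1, \ldots, Y_{d-2}$ to be positive $q$-even, whereas $W_d(N)$ further demands this at position $d-1$. When $Y_{d-1}$ is positive $q$-even the concatenation is already a valid competitor and the argument terminates; in the residual cases I plan to regroup $p$-powers between the two final slots using the disjoint $\cP$-decompositions $\cP(Y_{d-1}) \sqcup \cP(X_d) \subseteq \cP(N)$, and, if necessary, back up to the earliest index $k$ at which $Y$ already dominates $X$ and redistribute the tail $Y_k, \ldots, Y_{d-1}, X_d$ freely (subject only to the constraints at positions $k, \ldots, d-1$), much in the spirit of Sheats' splitting moves in the proof of Theorem \ref{T:greedy}. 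The ``optimal'' version of (i) is obtained from the same constructions, since each of them is weight-monotone: a lex-improvement in the restricted composition, or a direct weight decrease, transfers to the extended composition through the identity $\wt(Y_1,\ldots,Y_{d-1}, X_d) = \wt(Y_1,\ldots,Y_{d-1}) + d X_d$. Reconciling the $q$-evenness constraint (only at positions $1, \ldots, d-1$) with the no-carry $\cP$-disjointness during these lifts is the sole combinatorial difficulty, and I expect to borrow Sheats' bookkeeping at precisely this point.
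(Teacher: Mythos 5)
Your treatments of (ii) and (iii) are correct and complete. Prepending $X_1$ works exactly as you say, because after the index shift the $q$-evenness constraints of $W_{d-1}(N-X_1)$ land in precisely the positions $2,\ldots,d-1$ that $W_d(N)$ requires, and the identity $\wt(X_1,Y_2,\ldots,Y_d)=N+\wt(Y_2,\ldots,Y_d)$ transfers both lexicographic and weight comparisons; the $p^n$-scaling bijection is exactly the paper's own argument for (iii). (The paper dismisses (i) and (ii) as ``obvious from definition,'' so for (ii) you are in fact more careful than the source.)

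The problem is (i), and your instinct that the residual case is the real difficulty is correct --- but no regrouping of $p$-powers will repair it, because (i) is false as written once $d\ge 3$. The point is that $N-X_d=\bigoplus_{i<d}X_i$ is always $q$-even, and for $q$-even $M$ the modest (and the optimal) element of $W_{d-1}(M)$ always has last entry $0$ when $d-1\ge 2$: given any $(Y_1,\ldots,Y_{d-1})\in W_{d-1}(M)$ with $Y_{d-1}>0$ (automatically $q$-even), the composition $(Y_1+Y_{d-1},Y_2,\ldots,Y_{d-2},0)$ also lies in $W_{d-1}(M)$, is lexicographically larger, and has weight smaller by $(d-2)Y_{d-1}$. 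Since $X_{d-1}>0$, the truncation $(X_1,\ldots,X_{d-1})$ can never be extremal in $W_{d-1}(N-X_d)$. Concretely, for $q=p=3$ and $N=8$ the modest and optimal element of $W_3(8)$ is $(6,2,0)$, yet $W_2(8)$ contains $(8,0)$, which beats $(6,2)$ both lexicographically and in weight. What you should prove instead --- and what is actually needed later in the proof of Proposition \ref{P:weight estimation for modest/optimal elt} --- is that $(X_1,\ldots,X_{d-1},0)$ is modest/optimal in $W_d(N-X_d)$, equivalently that $(X_1,\ldots,X_{d-1})$ is extremal among those elements of $W_{d-1}(N-X_d)$ whose last entry is positive and $q$-even. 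With that reformulation the constraint at position $d-1$ is restored, and your ``append $X_d$'' move becomes exactly as routine as the prepending in (ii); no Sheats-style splitting is required.
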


\begin{proof}
  (i) and (ii) are obvious from definition in each case. To show (iii), we observe that all $p$-powers in $\cP(p^nN)$ are divisible by $p^n$. Thus, for $(Y_i) \in W_d(p^nN)$, $p^n \mid Y_i$ for all $i$ since $\cP(Y_i) \subset \cP(p^nN)$. Moreover, $(Y_i) \mapsto (p^{-n}Y_i)$ gives a 1-to-1 correspondence between compositions in $W_d(p^nN)$ and $W_d(N)$. (iii) follows from this observation easily in both cases.
\end{proof}

Given base $p$ expansion $n=\sum_{j \geq 0} a_jp^j$, we define $\Gamma(n) \in \NN^f$ to be the column vector $[\mu_0, \ldots, \mu_{f-1}]^t$, where 
\[
\mu_i=\sum_{j \equiv i \smod{f}} a_j.
\]
Let $\bar{\psi}_0 := [1, p, \ldots, p^{f-1}]^t$, then
\[
\inner{\bar{\psi}_0}{\Gamma(n)}=\mu_0+\cdots+p^{f-1}\mu_{f-1}
\]
is the sum of base $q$ digits of $N$. In particular, $n$ is $q$-even iff $(q-1) \mid \inner{\bar{\psi}_0}{\Gamma(n)}$. Then $\bld{X} \in W_d(N)$ if and only if
\begin{itemize}
  \item[(1)] $\Gamma(N)=\Gamma(X_1)+\Gamma(X_2)+\cdots+\Gamma(X_d)$,
  \item[(2)] for $1 \leq i \leq (d-1)$, $(q-1) \mid \inner{\bar{\psi}_0}{\Gamma(X_i)} \neq 0$. 
\end{itemize}
For a composition $\bld{X}=(X_1, \ldots, X_d)$ of $N$, define $\Gamma(\bld{X})$ to be the $f \times d$ matrix with columns $\Gamma(X_1), \ldots, \Gamma(X_d)$.

\begin{exmp} 
  Let $q=9$ and $N=131$. In base 3, $N=11212_3$. Thus $\Gamma(N)=[5, 2]^t$. For any $\bld{X} \in W_2(N)$, $\Gamma(\bld{X})$ is one of the two matrices: $\begin{bmatrix}5 & 0\\ 1 & 1\end{bmatrix}, \begin{bmatrix}2 & 3\\ 2 & 0\end{bmatrix}$. $(128, 3)=(11202_3, 10_3)$, $(104, 27)=(10212_3, 1000_3) \in W_2(N)$ correspond to the first one, and the rest correspond to the second one. 
\end{exmp}

In the example above, we give a partition of compositions in $W_d(N)$ with respect to matrix representation. Given $B \in \Mat_{f \times d}(\NN)$, define
\[
W^B_d(N):=\{\bld{X} \in W_d(N): \Gamma(\bld{X})=B\}.
\]
We call $B$ a \textit{valid matrix} of $W_d(N)$ if $W^B_d(N) \neq \emptyset$. Let $B_1, \ldots, B_d$ be columns of $B$, then $B$ is valid if and only if 
\begin{itemize}
  \item[(1)] $\Gamma(N)=B_1+\cdots+B_d$,
  \item[(2)] for $1 \leq i \leq (d-1)$, $(q-1) \mid \inner{\bar{\psi}_0}{B_i} \neq 0$. 
\end{itemize}
For $n > 0$, denote $\tau(n)$ the nonincreasing sequence of $p$-powers in $\cP(n)$ and $\tau_k(n)$ be its subsequence consisting of those $p^i$ with $i \equiv k \smod{f}$ for $0 \leq k < f$. 

\begin{exmp} 
  Take $q=9$ and $N=131=11212_3$. Then 
  \[
  \tau(N)=(3^4, 3^3, 3^2, 3^2, 3^1, 3^0, 3^0),~ \tau_0(N)=(3^4, 3^2, 3^2, 3^0, 3^0), ~\tau_1(N)=(3^3, 3^1).
  \]
\end{exmp}

Given $\bld{X} \in W_d(N)$, $\tau_k(X_i)$'s give a partition of $p$-powers in $\tau_k(N)$ for each $k$. We call $\bld{X}$ is \textit{$\tau$-monotonic} if the sequence $\tau_k(N)$  is the concatenation of the subsequences $\tau_k(X_1), \tau_k(X_2), \ldots, \tau_k(X_d)$ for all $0 \leq k < f$. Note that there is a unique $\tau$-monotonic composition in $W^B_d(N)$ for each valid $B$.

\begin{lemma}
  Suppose $B$ is a valid matrix of $W_d(N)$, then the $\tau$-monotonic composition with respect to $B$ is lexicographically the largest and acheives the unique minimum weight in $W_d^B(N)$. In particular, both modest and optimal compositions are $\tau$-monotonic.
\end{lemma}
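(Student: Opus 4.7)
The plan is to reduce the lemma to the existence of a single ``swap'' move that monotonically drives any non-$\tau$-monotonic composition toward the $\tau$-monotonic one, simultaneously strictly increasing the lex order and strictly decreasing the weight. First I would fix a valid matrix $B$ with columns $B_1, \dots, B_d$ and note that any $\bld{X} \in W_d^B(N)$ is determined by choosing, for each residue class $k \in \{0, \dots, f-1\}$, a partition of the multiset $\tau_k(N)$ of $p$-powers of $N$ in that class into $d$ sub-multisets of prescribed sizes $(B_1)_k, \dots, (B_d)_k$. The $\tau$-monotonic composition is the specific partition that places the largest $(B_1)_k$ powers from $\tau_k(N)$ into $X_1$, the next $(B_2)_k$ into $X_2$, and so on down to $X_d$.

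Next I would introduce the swap: if $\bld{Y} \in W_d^B(N)$ is not $\tau$-monotonic, then there exist indices $i < j$, a residue class $k$, and powers $p^{e_1} \in \tau_k(Y_i)$, $p^{e_2} \in \tau_k(Y_j)$ with $e_1 < e_2$; form $\bld{Y}'$ by replacing $Y_i$ with $Y_i - p^{e_1} + p^{e_2}$ and $Y_j$ with $Y_j + p^{e_1} - p^{e_2}$, leaving the other entries fixed. I would check that $\bld{Y}' \in W_d^B(N)$: the column sums are preserved because both exchanged powers lie in residue class $k$, so $\Gamma(\bld{Y}') = B$ and in particular the $q$-evenness conditions (which are encoded by $B$) remain in force. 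The one subtle point is that $Y_i - p^{e_1} + p^{e_2}$ must still admit a no-carry base-$p$ expansion, which requires $Y_i$ to contain at most $p-2$ copies of $p^{e_2}$; this holds because the $e_2$-digit of $N$ is at most $p-1$ and $Y_j$ already contributes at least one copy of $p^{e_2}$.

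A direct computation then gives
\[
\wt(\bld{Y}') - \wt(\bld{Y}) = (i - j)(p^{e_2} - p^{e_1}) < 0,
\]
and since $Y_i' > Y_i$ while $Y_l$ is unchanged for every $l < i$, the swap also strictly advances $\bld{Y}$ in lexicographic order. Because $W_d^B(N)$ is finite and each swap strictly changes both quantities, iterating swaps from any starting point must terminate at the unique composition admitting no further swap, namely the $\tau$-monotonic one. Hence the $\tau$-monotonic composition is simultaneously the lex-largest and the unique minimum-weight element of $W_d^B(N)$. For the ``in particular'' statement, use that $W_d(N) = \bigsqcup_B W_d^B(N)$: a composition that is lex-largest (respectively, of minimum weight) over all of $W_d(N)$ must also be so within its own class $W_d^B(N)$, hence $\tau$-monotonic by the first part.

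The main obstacle is verifying that the swap stays inside $W_d^B(N)$. A priori, replacing $p^{e_1}$ in $Y_i$ by the larger $p^{e_2}$ could introduce a carry in $Y_i$'s base-$p$ expansion and destroy the disjoint-union decomposition $\cP(N) = \bigsqcup_l \cP(Y_l)$. The resolution is the digit-count argument above, which is the only place where the base-$p$ digits of $N$ enter in an essential way and which mirrors the combinatorial bookkeeping that underlies Sheats' treatment of the greedy case.
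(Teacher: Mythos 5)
Your proposal is correct and follows essentially the same approach as the paper: locate an ``inversion'' within a single residue class, swap the two $p$-powers to simultaneously increase lex order and decrease weight, and conclude that the $\tau$-monotonic composition is the unique extremum in $W_d^B(N)$. Your extra care in verifying that the swap preserves the no-carry condition (via the digit-count argument) is a point the paper's one-line justification glosses over, but it is the same argument, just spelled out.
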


\begin{proof}
  Take $\bld{X}=(X_1, \ldots, X_d) \in W_d^B(N)$ which is not $\tau$-monotonic. Then there exist some $k, i, j, m, n$ such that $i < j, m < n$, with $p^m \in \tau_k(X_i), p^n \in \tau_k(X_j)$. Consider the composition $\bld{Y}=(X_1, \ldots, X_i-p^m+p^n, \ldots, X_j-p^n+p^m, \ldots, X_d)$. Then $\bld{Y} \in W_d^B(N)$ since $m \equiv n \equiv k \smod{f}$. Clearly, $\bld{Y}$ is lexicographically larger than $\bld{X}$. Easy computation shows that $\wt(\bld{Y})=\wt(\bld{X})-(j-i)(p^n-p^m) < \wt(\bld{X})$.
\end{proof}

Define
\[
\fJ := \{\Gamma(n): n > 0 \text{ is $q$-even} \}.
\]
Given $B=[B_1, \ldots, B_d]$ an $f \times d$ matrix with columns $B_i$, the conditions for $B$ being valid for $W_d(N)$ can be translated as
\begin{itemize}
  \item[(1)] $\Gamma(N)=B_1+\cdots+B_d$,
  \item[(2)] $B_i \in \fJ$ for $1 \leq i \leq (d-1)$. 
\end{itemize}
We follow Sheats' discussion in \cite{Sheats-1998} to give a characterization of vectors in $\fJ$. Let $\ebar_0, \ldots, \ebar_{f-1}$ be the standard basis of $\RR^f$, i.e., $[\ebar_0, \ldots, \ebar_{f-1}]=I$, the identity matrix. Define matrix $E=[E_0, E_1, \ldots, E_{f-1}]$ with columns
\[
E_i := p\ebar_{i-1}-\ebar_i.
\]
Here and from now on, subcripts which should range from $0$ to $f-1$ are evaluated modulo $f$, e.g. $\ebar_{-1}=\ebar_{f-1}$ and $E_0=p\ebar_{f-1}-\ebar_0$. Given vectors $\bar{u}$ and $\bar{v}=E\bar{u}$, we have, for all $i$
\[
v_i=pu_{i+1}-u_i.
\]
Let $R = [\ebar_1, \ebar_2, \ldots, \ebar_{f-1}, \ebar_0]$ be the permutation matrix such that $R\ebar_i=\ebar_{i+1}$. Then $R^f=I$ and $\inner{R\bar{u}}{R\bar{v}}=\langle \bar{u}, \bar{v} \rangle$ for any $\bar{u}$ and $\bar{v}$. Recall that $\bar{\psi}_0= [1, p, \ldots, p^{f-1}]^t$. Define 
\[
\bar{\psi}_i := R^i \bar{\psi}_0=[p^{f-i}, \ldots, p^{f-1}, 1, \ldots, p^{f-1-i}]^t
\]
for $1 \leq i < f$. Then
\[
\inner{\bar{\psi}_i}{E_j}=\begin{cases} q-1 \quad &\text{if } i=j\\ 0 \quad &\text{otherwise,} \end{cases}
\]
which implies
\[
E^{-1}=(q-1)^{-1}[\bar{\psi}_0, \bar{\psi}_1, \ldots, \bar{\psi}_{f-1}]^t.
\]
Given two vectors $\bar{u}$ and $\bar{v}$, we denote $\bar{u} \geq \bar{v}$ if $u_i \geq v_i$ for all $i$, $\bar{u} > \bar{v}$ if $\bar{u} \geq \bar{v}$ and $u_i > v_i$ for some $i$ and $\bar{u} \gg \bar{v}$ if  $u_i > v_i$ for all $i$.

\begin{lemma} \label{L:lemma1}
  Let $\bar{u}=E\bar{a}$ and $\bar{v}=E\bar{b}$, then
  \begin{itemize}
    \item[(i)] $\bar{u} > \bar{v} \Rightarrow \bar{a} \gg \bar{b}$. In particular, if $\bar{u} > \bar{0}$, then $\bar{a} \gg \bar{0}$.
    \item[(ii)] Let $\bar{1}=[1, \ldots, 1]^t$. If $\bar{0} < \bar{u} < (p-1) \bar{1}$, then $\bar{0} \ll \bar{a} \ll \bar{1}$.
  \end{itemize}
\end{lemma}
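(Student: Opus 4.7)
The plan is to exploit the explicit inverse formula
\[
E^{-1}=(q-1)^{-1}[\bar\psi_0,\dots,\bar\psi_{f-1}]^{t}
\]
recorded immediately before the lemma, together with the trivial but crucial observation that every coordinate of every $\bar\psi_i$ is a positive power of $p$. This positivity will let inequalities in $\bar u,\bar v$ pass through $E^{-1}$ as strict coordinatewise inequalities in $\bar a,\bar b$.

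For part (i), I would apply $E^{-1}$ to $\bar u-\bar v=E(\bar a-\bar b)$, obtaining
\[
(\bar a-\bar b)_{i}\;=\;(q-1)^{-1}\inner{\bar\psi_{i}}{\bar u-\bar v}.
\]
The hypothesis $\bar u>\bar v$ says $\bar u-\bar v\geq\bar 0$ with at least one strictly positive coordinate; pairing such a vector with the strictly positive vector $\bar\psi_i$ gives a strictly positive inner product for every $i$. Hence $a_i>b_i$ for all $i$, i.e.\ $\bar a\gg\bar b$. The ``in particular'' clause is the special case $\bar v=\bar 0$ (so $\bar b=\bar 0$).

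For part (ii), the lower bound $\bar a\gg\bar 0$ is exactly the special case just established. For the upper bound I intend to transform the problem by a complementary shift. A direct computation from $E_{j}=p\bar e_{j-1}-\bar e_{j}$ gives $E\bar 1=(p-1)\bar 1$. Setting $\bar c:=\bar 1-\bar a$, linearity yields $E\bar c=(p-1)\bar 1-\bar u$, and the hypothesis $\bar u<(p-1)\bar 1$ translates precisely to $E\bar c>\bar 0$. Applying (i) with $\bar v=\bar 0$ to $\bar c$ in place of $\bar a$ then produces $\bar c\gg\bar 0$, i.e.\ $\bar a\ll\bar 1$.

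There is no real obstacle here; the lemma is a purely linear-algebraic consequence of the positivity of the entries of the $\bar\psi_i$ combined with the identity $E\bar 1=(p-1)\bar 1$. The only thing to keep in mind is the convention for the ordering symbols $\geq,>,\gg$ stated just before the lemma, so that one correctly interprets $\bar u - \bar v > \bar 0$ as ``nonnegative and nonzero'' when invoking strict positivity of the inner product against $\bar\psi_i$.
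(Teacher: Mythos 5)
Your proof is correct and follows essentially the same route as the paper: the paper likewise proves (i) by applying $E^{-1}$ (whose entries, being positive multiples of the $p$-power entries of the $\bar\psi_i$, are all positive) to $\bar u-\bar v>\bar 0$, and deduces (ii) from (i) via the identity $E\bar 1=(p-1)\bar 1$. Your write-up merely makes the positivity-of-inner-products step and the complementary shift $\bar c=\bar 1-\bar a$ explicit.
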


\begin{proof}
  $\bar{a}-\bar{b}=E^{-1}(\bar{u}-\bar{v})$. Since all components of $E^{-1}$ are positive, $\bar{u}-\bar{v}>\bar{0}$ implies $\bar{a}-\bar{b} \gg \bar{0}$. This proves (i). (ii) is a direct application of (i) as $[p-1, \ldots, p-1]^t=E[1, \ldots, 1]^t$.
\end{proof}

Take a positive integer $n$. Let $E\bar{\alpha}=\Gamma(n)$, then we have, for each $i$
\[
\alpha_i=(q-1)^{-1}\inner{\bar{\psi}_0}{\Gamma(p^{f-i}n)}
\]
since $R\Gamma(n)=\Gamma(pn)$ and $\inner{\bar{\psi}_i}{\Gamma(n)}=\inner{R^i\bar{\psi}_0}{\Gamma(n)}=\inner{\bar{\psi}_0}{R^{f-i}\Gamma(n)}$. In particular,
\[
\bar{\alpha} \in \ZZ^f \Leftrightarrow n \mbox{ is $q$-even}.
\]
The above discussion can be rephrased as following.

\begin{prop}[{\cite[Lem. 3.4]{Sheats-1998}}]\label{P:integral}
  $\fJ=(E\ZZ^f)\cap (\NN^f\backslash\{\bar{0}\})$.
\end{prop}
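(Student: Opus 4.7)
The plan is to prove the two inclusions separately, leaning on the explicit form of $E^{-1}$ established above and on the elementary fact that $\langle\bar\psi_0,\Gamma(n)\rangle$ is the sum of base-$q$ digits of $n$, so in particular $\langle\bar\psi_0,\Gamma(n)\rangle \equiv n \pmod{q-1}$.

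For the forward inclusion $\fJ \subseteq (E\ZZ^f)\cap(\NN^f\setminus\{\bar 0\})$, take any $q$-even $n>0$ and write $\Gamma(n) = E\bar\alpha$. Nonnegativity and nonvanishing of $\Gamma(n)$ are automatic. For integrality of $\bar\alpha$, I would invoke the identity
\[
\alpha_i = (q-1)^{-1}\langle\bar\psi_0, \Gamma(p^{f-i}n)\rangle
\]
already derived in the text, which reduces $\alpha_i \in \ZZ$ to the assertion that $p^{f-i}n$ is $q$-even. But $p^f \equiv 1 \pmod{q-1}$ makes $p$ a unit modulo $q-1$, so $(q-1)\mid n$ forces $(q-1)\mid p^{f-i}n$ for every $i$.

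For the reverse inclusion, suppose $\bar v \in (E\ZZ^f)\cap(\NN^f\setminus\{\bar 0\})$; the goal is to exhibit a $q$-even $n>0$ with $\Gamma(n)=\bar v$. The first step is to manufacture \emph{some} $n>0$ with $\Gamma(n) = \bar v$: for each index $i$, split $v_i$ as a sum of integers in $\{0,\ldots,p-1\}$ (e.g. write $v_i = (p-1)k_i + r_i$ with $0\le r_i<p-1$) and install these values as base-$p$ digits of $n$ at distinct exponents congruent to $i$ modulo $f$; there is no obstruction because arbitrarily large exponents are available. Since $\bar v \neq \bar 0$, this $n$ is positive. The second step is to verify automatically that $n$ is $q$-even: since $p^j \equiv p^{j \bmod f} \pmod{q-1}$, one computes $n \equiv \sum_{i=0}^{f-1} p^i v_i = \langle\bar\psi_0, \bar v\rangle \pmod{q-1}$. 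But the zeroth component of $E^{-1}\bar v \in \ZZ^f$ is exactly $(q-1)^{-1}\langle\bar\psi_0, \bar v\rangle$, forcing $(q-1)\mid \langle\bar\psi_0,\bar v\rangle$ and hence $(q-1)\mid n$.

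The main conceptual point is that the hypothesis $E^{-1}\bar v \in \ZZ^f$ demands integrality of \emph{all} the scalars $(q-1)^{-1}\langle\bar\psi_i,\bar v\rangle$, whereas $q$-evenness of the constructed $n$ only calls on the $i=0$ case. Coherence is guaranteed by the fact that $\bar\psi_i$ is the cyclic shift $R^i\bar\psi_0$ together with $p$ being invertible mod $q-1$, making all these divisibilities equivalent; this is the same observation powering the forward direction. Aside from this single point, I expect every step to be routine bookkeeping.
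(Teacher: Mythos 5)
Your proof is correct and follows essentially the same route as the paper, namely the identity $\alpha_i=(q-1)^{-1}\langle\bar\psi_0,\Gamma(p^{f-i}n)\rangle$ together with the fact that $p$ is a unit modulo $q-1$, so that integrality of $E^{-1}\Gamma(n)$ is equivalent to $n$ being $q$-even. The one thing you spell out that the paper leaves implicit is the surjectivity of $\Gamma$ onto $\NN^f\setminus\{\bar 0\}$, which is needed for the reverse inclusion; that is a worthwhile clarification.
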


\subsubsection{Criterion for $W_d(N) \neq \emptyset$.} \label{subsub:IJ} 

For $d > 0$, define
\[
I_d := \{ \Gamma(n): \exists\ \bar{v}_1, \ldots, \bar{v}_{d-1} \in \fJ \mbox{ such that } \Gamma(n) > \bar{v}_1+\cdots+\bar{v}_{d-1}\},
\]
\[
\quad J_d := \fJ \cap (I_d \backslash I_{d+1}).
\]
For $d=0$, we set $J_0 = \emptyset$. By definition, $J_d$ consists of those $\Gamma(n)$ such that $n$ can be written as a sum of $d$ many, but not $d+1$ many, positive $q$-even numbers without carry over in base $p$. Then 
\begin{equation} \label{E:IJ}
W_d(N) \neq \emptyset \Leftrightarrow \Gamma(N) \in J_{d-1} \cup I_d.
\end{equation}
The next proposition by Sheats characterizes elements in $I_m$ and $J_m$.

\begin{prop}[{\cite[Prop. 4.3]{Sheats-1998}}] \label{P:IJ}
  For $m \geq 1$,
  \begin{itemize}
    \item[(i)] $I_m=\{ E\bar{x} \in \NN^f\backslash\{\bar{0}\}: \bar{x} \in \RR^f \textup{ and } \min_{0 \leq i < f}(x_i)>m-1\}$,
    \item[(ii)] $J_m=\{ E\bar{a} \in \NN^f\backslash\{\bar{0}\}: \bar{a} \in \RR^f \textup{ and } \min_{0 \leq i < f}(a_i)=m\}$.
  \end{itemize}
\end{prop}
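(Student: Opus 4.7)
The plan is to derive both (i) and (ii) from the preceding results, primarily Lemma~\ref{L:vanishing of V_d(k)} together with Proposition~\ref{P:integral} and Lemma~\ref{L:lemma1}. The key bridge is the identity $L_n = \min_i x_i$ whenever $\bar{u} = \Gamma(n) = E\bar{x}$. Indeed, using $R\Gamma(n) = \Gamma(pn)$, $\bar{\psi}_i = R^i \bar{\psi}_0$, and the closed form $\bar{x} = (q-1)^{-1}[\bar{\psi}_0, \ldots, \bar{\psi}_{f-1}]^t \bar{u}$ recalled in the text, one computes $l(np^i) = \inner{\bar{\psi}_0}{R^i \bar{u}} = \inner{\bar{\psi}_{f-i}}{\bar{u}} = (q-1)x_{f-i}$, so $L_n = \min_i l(np^i)/(q-1) = \min_i x_i$, a quantity depending only on $\bar{u}$.

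For the forward direction of (i), assume $\bar{u} \in I_m$, i.e., $\bar{u} > \bar{v}_1 + \cdots + \bar{v}_{m-1}$ with each $\bar{v}_i \in \fJ$. Write $\bar{v}_i = E\bar{a}_i$ with $\bar{a}_i \in \ZZ^f$ via Proposition~\ref{P:integral}. Since $\bar{v}_i > \bar{0}$, Lemma~\ref{L:lemma1}(i) gives $\bar{a}_i \gg \bar{0}$, hence $\bar{a}_i \geq \bar{1}$ by integrality. Applying the same lemma to $\bar{u} - \sum_i \bar{v}_i > \bar{0}$ yields $\bar{x} \gg \sum_i \bar{a}_i \geq (m-1)\bar{1}$, so $\min_i x_i > m-1$.

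For the reverse direction of (i), assume $\bar{u} = \Gamma(n) \in \NN^f \setminus \{\bar{0}\}$ with $\min_i x_i > m-1$. By the bridge identity $L_n > m-1$, so Lemma~\ref{L:vanishing of V_d(k)} gives $V_{m-1}(n) \neq \emptyset$. Pick any $(m_0, m_1, \ldots, m_{m-1}) \in V_{m-1}(n)$: it satisfies $n = \bigoplus_{i=0}^{m-1} m_i$ with $m_0 > 0$ and each $m_i$ for $i \geq 1$ a positive $q$-even integer. Setting $\bar{v}_i := \Gamma(m_i) \in \fJ$ for $1 \leq i \leq m-1$, the additivity of $\Gamma$ under $\oplus$ gives $\bar{u} - \sum_{i=1}^{m-1} \bar{v}_i = \Gamma(m_0) > \bar{0}$, which places $\bar{u}$ in $I_m$.

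Part (ii) then follows formally from (i) and the definition $J_m = \fJ \cap (I_m \setminus I_{m+1})$. For $\bar{u} = E\bar{a} \in \fJ$ with $\bar{a} \in \ZZ^f$, part (i) says $\bar{u} \in I_m$ iff $\min_i a_i > m-1$, and $\bar{u} \notin I_{m+1}$ iff $\min_i a_i \leq m$; for integer $\bar{a}$ these combine to $\min_i a_i = m$. The only step with genuine combinatorial content is the reverse direction of (i), and all of that content is packaged into Lemma~\ref{L:vanishing of V_d(k)}; what remains is a translation through Proposition~\ref{P:integral} and Lemma~\ref{L:lemma1}.
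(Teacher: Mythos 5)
The paper does not give its own proof of this proposition; it is quoted directly from Sheats \cite[Prop.\ 4.3]{Sheats-1998}, as is Lemma~\ref{L:vanishing of V_d(k)} (cited as Prop.\ 4.3(a) of the same reference). So there is no internal argument in the paper to compare against, and what you have done is to re-derive the statement of Prop.\ 4.3 from the statement of Prop.\ 4.3(a) together with the linear-algebra dictionary of \cref{subsub:setup}. That derivation is logically valid within the paper's framework, and the bridge identity $L_n=\min_i x_i$ (when $\Gamma(n)=E\bar x$) is exactly the right observation: your check via $l(np^i)=\inner{\bar\psi_0}{R^i\bar u}=(q-1)x_{f-i}$ is correct, and both directions of (i) go through as written. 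Be aware, though, that since both statements are parts of the same proposition in Sheats, what you have produced is closer to a translation between two formulations of one combinatorial fact than to an independent proof; the "genuine combinatorial content" you refer to is already outsourced to the imported Lemma~\ref{L:vanishing of V_d(k)}.

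There is one small but real gap in part (ii). You establish that for $\bar u=E\bar a\in\fJ$ (so $\bar a\in\ZZ^f$), membership in $J_m$ is equivalent to $\min_i a_i=m$, which is the inclusion $J_m\subseteq\{E\bar a\in\NN^f\setminus\{\bar0\}:\min_i a_i=m\}$. For the reverse inclusion you must start from an arbitrary $\bar u=E\bar a\in\NN^f\setminus\{\bar 0\}$ with $\bar a\in\RR^f$ and $\min_i a_i=m$, and first argue that $\bar u\in\fJ$, i.e.\ that $\bar a$ is in fact integral. This does not follow from (i) alone. It does follow from the congruence $(q-1)a_i\equiv p^{f-i}n\pmod{q-1}$ (writing $\bar u=\Gamma(n)$) together with $\gcd(p,q-1)=1$: a single integral $a_i$ forces $(q-1)\mid n$, hence $\bar a\in\ZZ^f$ by the observation preceding Proposition~\ref{P:integral}. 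Adding this sentence closes the argument.
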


With \eqref{E:IJ}, it implies the following result which is indeed equivalent to Proposition \ref{P:vanishcondition}.

\begin{cor} \label{C:nonvanishing condition}
  Let $\Gamma(N)=E\bar{\alpha}$, then $W_d(N) \neq \emptyset$ \text{iff} $\min_{0 \leq i < f}(\alpha_i) \geq d-1$. \null\hfill\qedsymbol
\end{cor}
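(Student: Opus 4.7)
The plan is to combine the equivalence \eqref{E:IJ} with the explicit descriptions of $I_d$ and $J_{d-1}$ in Proposition \ref{P:IJ}. Writing $\bar\alpha = E^{-1}\Gamma(N)$, Proposition \ref{P:IJ}(i) gives $\Gamma(N)\in I_d \Leftrightarrow \min_i\alpha_i > d-1$, while (ii) gives $\Gamma(N)\in J_{d-1} \Leftrightarrow \bar\alpha\in\ZZ^f$ and $\min_i\alpha_i = d-1$. (For $d=1$ only $I_1$ is relevant since $J_0=\emptyset$, which is consistent because Lemma \ref{L:lemma1}(i) already forces $\min_i\alpha_i > 0$ whenever $N>0$.)

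Next I would pin down the integrality of $\bar\alpha$. From $\alpha_i = (q-1)^{-1}\inner{\bar\psi_0}{\Gamma(p^{f-i}N)}$, which equals $l(p^{f-i}N)/(q-1)$, together with the congruence that the base-$q$ digit sum of $n$ is $\equiv n \pmod{q-1}$, the entry $\alpha_i$ lies in $\ZZ$ iff $(q-1)\mid p^{f-i}N$; since $\gcd(p,q-1)=1$, this is equivalent to $(q-1)\mid N$. Hence either all $\alpha_i$ are integers (when $N$ is $q$-even) or none of them are (when $N$ is $q$-odd).

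Splitting into these two cases then finishes the argument. If $N$ is $q$-even, $\min_i\alpha_i$ is an integer, and the union of the conditions $\min > d-1$ and $\min = d-1$ is precisely $\min \geq d-1$, so membership in $I_d \cup J_{d-1}$ is the desired inequality. If $N$ is $q$-odd, then $\Gamma(N)\notin\fJ \supseteq J_{d-1}$, so only the $I_d$ piece can contribute; but in this case $\min_i\alpha_i$ is a non-integer while $d-1$ is an integer, so the strict inequality $\min > d-1$ is the same as $\min \geq d-1$. In both cases the criterion collapses to $\min_i\alpha_i \geq d-1$. No single step here is a serious obstacle; the only delicate point is the integrality dichotomy above, which is precisely what allows the two pieces $I_d$ and $J_{d-1}$ to glue together into one clean inequality.
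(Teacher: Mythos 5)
Your proposal is correct and is essentially the paper's intended argument: the corollary is stated there as an immediate consequence of \eqref{E:IJ} and Proposition \ref{P:IJ}, with the integrality dichotomy ($\bar\alpha\in\ZZ^f$ iff $N$ is $q$-even) already recorded in the discussion preceding Proposition \ref{P:integral}. You have simply written out the case split that the paper leaves implicit.
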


\subsubsection{Modest/optimal composition.} \label{subsub:modest/optimal}

The following results give estimation on components of the modest and optimal compositions.

\begin{prop}\label{P:estimation on M/O}
  Let $\bld{X}=(X_1, \ldots, X_d) \in W_d(N)$ be modest or optimal. Then $\Gamma(X_i) \in J_1$ for $2 \leq i \leq d-1$, $X_d=0$ if $N$ is $q$-even or $\Gamma(X_d) \in I_1\backslash I_2$ if $N$ is $q$-odd.
\end{prop}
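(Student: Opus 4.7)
The overall strategy is proof by contradiction: whenever one of the three conclusions fails, I will produce a replacement $\bld{X}' \in W_d(N)$ that is simultaneously lex-larger than $\bld{X}$ (breaking modesty) and has strictly smaller weight (breaking optimality). The common operation is to peel off a positive $q$-even summand $Y$ from some entry $X_j$ with $j \ge 2$ and fuse it into $X_1$ via the no-carry sum $X_1 \oplus Y$. The latter is legal because $\cP(X_1)$ and $\cP(X_j)$ are disjoint sub-multisets of $\cP(N)$, and the fused entry is still positive $q$-even.

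The technical heart is a splitting lemma. Suppose $\Gamma(X_j) \in I_2$. By the definition of $I_2$ there exists $\bar v_1 \in \fJ$ with $\bar v_1 < \Gamma(X_j)$ componentwise. Since $\Gamma$ only records digit counts in residue classes modulo $f$, the inequality $\bar v_1 \le \Gamma(X_j)$ lets us carve out a sub-multiset $Q \subseteq \cP(X_j)$ with $\Gamma(\sum Q) = \bar v_1$. Setting $Y := \sum Q$ and $Z := X_j - Y$ produces $X_j = Y \oplus Z$ with $Y$ positive $q$-even and $Z$ positive (the strictness $\bar v_1 \ne \Gamma(X_j)$ ensures $Z \ne 0$). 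When $X_j$ itself is $q$-even, Proposition \ref{P:integral} promotes $Z$ to $q$-even via $\Gamma(Z) = \Gamma(X_j) - \bar v_1 \in E\ZZ^f \cap (\NN^f \setminus \{\bar 0\}) = \fJ$.

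Granting this tool, the three sub-claims fall out quickly. For a middle index $2 \le i \le d-1$ with $\Gamma(X_i) \notin J_1$: since $X_i$ is $q$-even we get $\Gamma(X_i) \in \fJ \cap I_2$; split $X_i = Y \oplus Z$ with both summands positive $q$-even, and set $\bld{X}' = (X_1 \oplus Y,\, X_2, \ldots, X_{i-1},\, Z,\, X_{i+1}, \ldots, X_d) \in W_d(N)$. A direct calculation gives $X_1' > X_1$ and
\[
\wt(\bld{X}') - \wt(\bld{X}) \;=\; Y + i Z - i X_i \;=\; (1-i)\,Y \;<\; 0.
\]
For the last entry with $N$ $q$-odd and $\Gamma(X_d) \in I_2$: apply the splitting to $X_d$ (now $Z$ may be $q$-odd, which is permitted at the last position) and run the same construction with $i = d$; the weight drops by $(d-1)Y$. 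For $N$ $q$-even with $X_d > 0$: reducing the no-carry identity $N = \sum_i X_i$ modulo $q-1$ gives $X_d \equiv N \equiv 0$, so $X_d$ is $q$-even, and $\bld{X}' = (X_1 \oplus X_d,\, X_2, \ldots, X_{d-1},\, 0)$ has weight change $-(d-1)\,X_d < 0$ and $X_1' > X_1$.

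The step I expect to require the most care is the splitting lemma, specifically ensuring that the abstract componentwise inequality $\bar v_1 < \Gamma(X_j)$ genuinely lifts to a multiset partition of $\cP(X_j)$ (the $\Gamma$ map forgets which specific $p$-power is chosen from each residue class mod $f$), and that the complement $Z$ inherits $q$-evenness from $X_j$ via Proposition \ref{P:integral}. Once the splitting is in place, the three cases collapse to the single bookkeeping identity that transferring mass $m$ from position $j$ to position $1$ changes the weight by exactly $(1-j)\,m$, which is strictly negative whenever $j \ge 2$, giving the required double contradiction against both modesty and optimality.
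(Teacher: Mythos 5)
Your proof is correct and follows essentially the same approach as the paper: show the contrapositive by splitting the offending entry $X_j$ into a positive $q$-even piece $Y$ that is fused into $X_1$ and a positive remainder $Z$, yielding a composition in $W_d(N)$ that is simultaneously lex-larger (defeating modesty, since $X_1+Y > X_1$) and of strictly smaller weight $\wt(\bld{X}) + (1-j)Y$ (defeating optimality). Your splitting lemma usefully makes explicit the lifting of the componentwise inequality $\bar v_1 < \Gamma(X_j)$ to a multiset partition of $\cP(X_j)$, a step the paper treats as immediate when it writes $X_i = a_1 \oplus a_2$ with $\Gamma(a_i) = \bar v_i$.
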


\begin{proof}
  We prove by contrapositive. 
  
  $N$ is $q$-even: If $X_d \neq 0$, then $\bld{X}$ is neither modest nor optimal from the discussion in \cref{subsec:special}. If $\Gamma(X_i)=\bar{v}_1+\bar{v}_2$ for some $2 \leq i \leq d-1$ and vectors $\bar{v}_1, \bar{v}_2 \in \fJ$, define 
  \[
  \bld{Y} := ( X_1+a_1, \ldots, X_{i-1}, a_2, X_i, \ldots, X_d ).
  \]
  Since $\bar{v}_i \in \fJ$, both $a_i$'s are $q$-even. The sum of entries in $\bld{Y}$ has no carry over in base $p$. So $\bld{Y} \in W_d(N)$. Moreover, $\bld{Y}$ is lexicographically larger than $\bld{X}$, and $\wt(\bld{Y})=\wt(\bld{X})-(i-1)a_1<\wt(\bld{X})$.
  
  $N$ is $q$-odd: Suppose $\Gamma(X_d)=\bar{w}_1+\bar{w}_2$ with $\bar{w}_1 \in \fJ, \bar{w}_2 \in I_1$. We have $X_d=b_1 \oplus b_2$ with $\Gamma(b_i)=\bar{w}_i$ and $b_1$ $q$-even. Define $\bld{Y} := (X_1+b_1, X_2, \ldots, X_{d-1}, b_2).$ Then similar argument as above shows that $\bld{Y} \in W_d(N)$, $\bld{Y}$ is lexicographically larger than $\bld{X}$ and $\wt(\bld{Y})<\wt(\bld{X})$.
\end{proof}

Take $N \in \ZZ_+$, let $\bar{u}=\Gamma(N)$ and $\bar{\beta}=E^{-1}\bar{u}$.

\begin{lemma} \label{L:main lemma}
  Let $\bar{v}=E\bar{\alpha} \in \NN^f$ with $\bar{0} < \bar{v} < \bar{u}$. Suppose $\min_{0 \leq i < f}(\floor{\beta_i}-\ceil{\alpha_i}) = k$ for some  $k \in \NN$, then there exists some $\bar{w} \in \fJ$ with $\bar{v} \leq \bar{w} \leq \bar{u}$ and $\bar{u}-\bar{w} \in J_k \cup I_{k+1}$.
\end{lemma}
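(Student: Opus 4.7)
The plan is to produce $\bar w$ via a candidate integer vector $\bar\gamma \in \ZZ^f$ and set $\bar w := E\bar\gamma$. A natural choice is $\bar\gamma := \ceil{\bar\alpha}$ (componentwise ceiling). By the integrality dichotomy noted in the discussion preceding Proposition~\ref{P:integral}---either $\bar\alpha \in \ZZ^f$ entirely, in which case $\bar\gamma = \bar\alpha$ and $\bar w = \bar v$; or $\alpha_i \notin \ZZ$ for every $i$, so $\{-\alpha_i\} \in (0,1)$ throughout---we have $\bar\gamma \in \ZZ^f$ and hence $\bar w \in E\ZZ^f$ as required by the definition of $\fJ$.

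The inequality $\bar w \geq \bar v$ follows from a direct computation. Writing $\{x\} := x - \floor{x}$, we have $(\bar w - \bar v)_i = (E(\bar\gamma - \bar\alpha))_i = p\{-\alpha_{i+1}\} - \{-\alpha_i\}$; this is an integer (as both $\bar w$ and $\bar v$ lie in $\ZZ^f$) sitting in the open interval $(-1, p)$, so it must be in $\{0, 1, \ldots, p-1\}$. For the classification of $\bar u - \bar w$, set $\bar\delta := \bar\beta - \bar\gamma = \bar\beta - \ceil{\bar\alpha}$, so $\bar u - \bar w = E\bar\delta$ and $\delta_i \geq \floor{\beta_i} - \ceil{\alpha_i} \geq k$ for every $i$. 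If $N$ is $q$-even then $\bar\beta \in \ZZ^f$ forces $\bar\delta \in \ZZ^f$ with $\min \delta_i = k$ (achieved at the index realizing the hypothesized minimum), placing $\bar u - \bar w \in J_k$ by Proposition~\ref{P:IJ}(ii). If $N$ is $q$-odd, then every $\beta_i \notin \ZZ$, so $\delta_i > \floor{\beta_i} - \ceil{\alpha_i} \geq k$ strictly, placing $\bar u - \bar w \in I_{k+1}$ by Proposition~\ref{P:IJ}(i). The upper bound $\bar w \leq \bar u$, equivalent to $\bar u - \bar w \in \NN^f$, follows again from the same ``integer in $(-1, p)$'' argument, now applied to $(p\delta_{i+1} - \delta_i)$ using that the fractional parts $\{\beta_i\}, \{\alpha_i\}$ are in $[0,1)$.

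The main obstacle is the degenerate possibility $\ceil{\bar\alpha} = \bar\beta$, which forces $\bar w = \bar u$ and hence $\bar u - \bar w = \bar 0 \notin J_k \cup I_{k+1}$. This can arise only when $k = 0$ (so $\bar\beta$ fails to strictly dominate $\ceil{\bar\alpha}$ in the integer lattice sense). In this situation the plan is to exhibit an alternative $\bar\gamma$ by strategically decrementing one coordinate of $\ceil{\bar\alpha}$ by $1$, and to verify that the modified $\bar w$ still satisfies $\bar v \leq \bar w$ and $\bar w \in \NN^f \setminus \{\bar 0\}$. This delicate adjustment---parallel to the structural case analysis in Sheats's proof of Proposition~\ref{P:IJ}~\cite{Sheats-1998}---will be the technical heart of the argument, since a naive choice can easily break either the lower bound $\bar w \geq \bar v$ or the nonnegativity of $\bar w$.
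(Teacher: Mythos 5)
Your construction breaks at the step ``$\bar{w}\leq\bar{u}$ follows again from the same `integer in $(-1,p)$' argument.'' That argument worked for $\bar{w}-\bar{v}$ because $\gamma_i-\alpha_i=\{-\alpha_i\}$ lies in the length-one interval $[0,1)$; but $\delta_i=\beta_i-\ceil{\alpha_i}$ is only bounded \emph{below} (by $k$) and has no upper bound, so there is no reason for $p\delta_{i+1}-\delta_i$ to exceed $-1$: only the minimum of the gaps $\floor{\beta_i}-\ceil{\alpha_i}$ is pinned at $k$, while the others can be arbitrarily large. Concretely, take $q=4$ (so $p=f=2$), $\bar{v}=\Gamma(2)=[0,1]^t$, hence $\bar{\alpha}=[2/3,1/3]^t$ and $\ceil{\bar{\alpha}}=[1,1]^t$, and take $\bar{u}=[0,6]^t$ (any $N$ whose six nonzero base-$2$ digits all sit in odd positions). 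Then $\bar{\beta}=[4,2]^t$ and the hypothesis holds with $k=1$, but your $\bar{w}=E\ceil{\bar{\alpha}}=[1,1]^t$ has $w_0=1>0=u_0$, so $\bar{w}\not\leq\bar{u}$; indeed $p\delta_1-\delta_0=2-3=-1$. Note this is not the degenerate case you flag at the end (which concerns $\ceil{\bar{\alpha}}=\bar{\beta}$ and $k=0$); it is the generic failure mode of the uniform choice $\bar{\gamma}=\ceil{\bar{\alpha}}$, and avoiding it is the actual content of the lemma. The paper instead anchors $\gamma_l=\ceil{\alpha_l}$ only at an index $l$ where the minimum $\floor{\beta_l}-\ceil{\alpha_l}=k$ is attained, and defines the remaining coordinates by the backwards recursion $\gamma_i=\min(\floor{\beta_i}-k,\ p\gamma_{i+1}-v_i)$: the first argument of the $\min$ keeps $\beta_i-\gamma_i\geq k$, the second keeps $w_i\geq v_i$, and the choice of anchor is what makes the upper bounds $w_i\leq u_i$ provable. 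In the example above this recursion produces $\bar{\gamma}=[2,1]^t$ and $\bar{w}=[0,3]^t$, which satisfies all the required conditions.

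A secondary point: your handling of the degenerate case is only a plan (``strategically decrementing one coordinate''), not an argument, so even that part is incomplete. But the essential missing content is the non-degenerate construction itself; the choice of $\bar{\gamma}$ must depend on where the minimum in the hypothesis is achieved, and your proposal does not do this.
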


\begin{proof} 
  To find such an $\bar{w}$ is equivalent to find a $\bar{\gamma}$ with $\bar{w}=E\bar{\gamma}$. Recall that if $\bar{x}=E\bar{a}$, $x_i=pa_{i+1}-a_i$. By Propositions \ref{P:integral} and \ref{P:IJ}, we get the following conditions on $\bar{\gamma}$:
  \begin{itemize}
    \item[(1)] $v_i \leq p\gamma_{i+1}-\gamma_i \leq u_i$,
    \item[(2)] $\gamma_i \in \ZZ$ and $\min_{0 \leq j < f}(\beta_j-\gamma_j) \geq k$.
  \end{itemize}
  To construct $\bar{\gamma}$, take an $l$ such that $\floor{\beta_l}-\ceil{\alpha_l}=k$. Let $\gamma_l=\ceil{\alpha_l}$. For $i=l-1, l-2, \ldots, l-f+1$, define inductively
  \[
  \gamma_i=\min(\floor{\beta_i}-k, p\gamma_{i+1}-v_i).
  \]
  Condition (2) holds automatically by the construction of $\gamma_i$. The construction also implies $v_i \leq p\gamma_{i+1}-\gamma_i$ for $i \neq l$. To prove it for $i=l$, we first show that $\gamma_i \geq \ceil{\alpha_i}$ for all $i$. By definition, $\gamma_l=\ceil{\alpha_l}$. We prove the rest by backwards induction. Suppose $\gamma_{i+1} \geq \ceil{\alpha_{i+1}}$. If $\gamma_i=\floor{\beta_i}-k$, then clearly $\gamma_i \geq \ceil{\alpha_i}$ since $\floor{\beta_i}-\ceil{\alpha_i} \geq k$; otherwise,
  \begin{align*}
    \gamma_i & = p\gamma_{i+1}-v_i\\
    & = p\gamma_{i+1}-p\alpha_{i+1}+\alpha_i \geq \alpha_i.
  \end{align*}
  Since $\gamma_i$ is an integer, we have $\gamma_i \geq \ceil{\alpha_i}$. Now we have $v_l \leq p\gamma_{l+1}-\gamma_l$ since
  \[
    p\gamma_{l+1}-v_l = p\gamma_{l+1}-p\alpha_{l+1}+\alpha_l \geq \ceil{\alpha_l}=\gamma_i.
  \]
  Next we show $p\gamma_{i+1}-\gamma_i \leq u_i$. For $i=l$,
  \begin{align*}
    u_l-(p\gamma_{l+1}-\gamma_l) & = p\beta_{l+1}-\beta_l-(p\gamma_{l+1}-\gamma_l)\\
    & = p(\beta_{l+1}-\gamma_{l+1})-(\beta_l-\gamma_l)\\
    & = p(\beta_{l+1}-\gamma_{l+1})-(\beta_l-\ceil{\alpha_l})\\
    & > p(\beta_{l+1}-\gamma_{l+1})-k-1 \geq -1,
  \end{align*}
  where the last inequality comes from that $\beta_{l+1}-\gamma_{l+1} \geq k$ and $p \geq 2$. Since the left-hand side is an integer, we have
  \[
  u_l-(p\gamma_{l+1}-\gamma_l) \geq 0.
  \]
  Now let $i \neq l$. If $\gamma_i=p\gamma_{i+1}-v_i$, then $p\gamma_{i+1}-\gamma_i=v_i \leq u_i$; otherwise, $\gamma_i=\floor{\beta_i}-k$, then a similar computation as in the $i=l$ case shows $p\gamma_{i+1}-\gamma_i \leq u_i$.
\end{proof}

\begin{prop} \label{P:weight estimation for modest/optimal elt}
  Take $N$ with base $p$-expansion $N=\sum_{i=0}^n a_ip^i$, where $a_n \neq 0$. Suppose $W_d(N) \neq \emptyset$. Let $\bld{X}=(X_1, \ldots, X_d) \in W_d(N)$ be modest or optimal, then
  \begin{itemize}
    \item[(i)] $X_1 \geq a_np^n$. In particular, $X_1>N/2$.
    \item[(ii)] $N \leq \wt(\bld{X}) < 2N$.
    \item[(iii)] $W_d(N-X_1)=\emptyset$ if $d \geq 2$. In particular, by \eqref{E:IJ}, $\Gamma(N-X_1) \not\in I_d$.
\end{itemize}
\end{prop}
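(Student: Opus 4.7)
My plan is to establish (i), (ii), and (iii) in that order, within an induction on $d$; the cases $d \in \{1,2\}$ are handled by Remark \ref{R:trivial case}, and within each inductive step I prove (i) for all $N$, then (ii) for all $N$ (using (i) at the same $d$ and (ii) at $d-1$), then (iii) for all $N$ (using (ii) at the same $d$).

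For (i) I would exploit the $\tau$-monotonicity of modest/optimal compositions established in the lemma preceding Proposition \ref{P:basicprop}. Writing $k := n \bmod f$, the leading $a_n$ entries of $\tau_k(N)$ are all equal to $p^n$ and are distributed to $X_1, X_2, \ldots, X_d$ in order; I claim they all land in $X_1$. In the clean case, if some $p^n$ sits in $X_j$ with $j \geq 2$ while $X_1$ contains a smaller power $p^i$ with $i \equiv n \pmod{f}$, exchanging $p^i \leftrightarrow p^n$ between $X_1$ and $X_j$ preserves the $q$-evenness of both blocks (since $p^i \equiv p^n \pmod{q-1}$ exactly when $i \equiv n \pmod{f}$), strictly increases $X_1$, and strictly decreases $\wt(\bld{X})$, contradicting modesty in the one case and optimality in the other. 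In the delicate case, where $X_1$ contains no such $p^i$, I would apply Lemma \ref{L:main lemma} with $\bar{v} = \Gamma(X_1)$ and $\bar{u} = \Gamma(N)$ (the slack parameter is at least $d-2$ because $(X_2, \ldots, X_d) \in W_{d-1}(N - X_1)$) to produce a positive $q$-even $\bar{w} > \Gamma(X_1)$ with $\Gamma(N) - \bar{w} \in J_{d-2} \cup I_{d-1}$. Realising $\bar{w}$ by an integer $w$ that contains the digits of $X_1$ together with additional top $p$-powers gives $w > X_1$ with $W_{d-1}(N - w) \neq \emptyset$, again contradicting modesty/optimality. Once $X_1 \geq a_n p^n$ is secured, the bound $X_1 > N/2$ follows from $N \leq a_n p^n + (p^n-1) < (a_n+1)p^n \leq 2 a_n p^n$.

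For (ii) I would use the identity
\[
\wt(\bld{X}) = X_1 + 2X_2 + \cdots + dX_d = N + \bigl(X_2 + 2X_3 + \cdots + (d-1)X_d\bigr),
\]
obtained by writing $2X_2 + \cdots + dX_d = (X_2 + \cdots + X_d) + (X_2 + 2X_3 + \cdots + (d-1)X_d) = (N - X_1) + \text{($W_{d-1}$-weight of $(X_2, \ldots, X_d)$)}$. Proposition \ref{P:basicprop}(ii) tells us $(X_2, \ldots, X_d)$ is modest/optimal in $W_{d-1}(N - X_1)$, so the inductive hypothesis bounds the bracketed sum by $2(N - X_1)$; combined with $X_1 > N/2$ from (i), this yields $\wt(\bld{X}) < N + 2(N - X_1) < 2N$. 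The lower bound $\wt(\bld{X}) \geq \sum_i X_i = N$ is immediate since every coefficient is at least $1$.

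For (iii) I argue by contradiction: assume $W_d(N - X_1) \neq \emptyset$ and pick a modest $(Y_1, \ldots, Y_d) \in W_d(N - X_1)$. Because $X_1 \oplus (N - X_1) = N$ is carry-free in base $p$ and the base-$p$ digits of $Y_1$ lie among those of $N - X_1$, the sum $X_1 \oplus Y_1$ is a carry-free (hence positive, and $q$-even because $n \equiv l(n) \pmod{q-1}$) integer; thus $\bld{Z} := (X_1 \oplus Y_1, Y_2, \ldots, Y_d) \in W_d(N)$ with $Z_1 = X_1 + Y_1 > X_1$. For modest $\bld{X}$ this makes $\bld{Z}$ lex-larger, a contradiction. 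For optimal $\bld{X}$, applying (ii) to $\bld{Y}$ in $W_d(N - X_1)$ gives $\wt(\bld{Y}) < 2(N - X_1)$, while $\wt(\bld{X}) \geq X_1 + 2(X_2 + \cdots + X_d) = X_1 + 2(N - X_1)$ since each coefficient of $X_i$ for $i \geq 2$ is at least $2$; hence
\[
\wt(\bld{Z}) = X_1 + \wt(\bld{Y}) < X_1 + 2(N - X_1) \leq \wt(\bld{X}),
\]
contradicting optimality. The main obstacle is part (i): the tension between base-$p$ carry-freeness and base-$q$ evenness obstructs naive digit rearrangement, and Lemma \ref{L:main lemma} is the key technical tool; parts (ii) and (iii) are essentially bookkeeping once (i) is in hand.
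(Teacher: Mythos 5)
Your handling of parts (ii) and (iii) is sound and in the same spirit as the paper's: (ii) via the identity $\wt(\bld{X}) = N + \wt(X_2,\ldots,X_d)$, Proposition~\ref{P:basicprop}(ii), and induction; (iii) by grafting $X_1$ onto the first entry of a composition of $N-X_1$ and then comparing lexicographic order (modest case) or weight (optimal case). The paper does essentially this, with minor variations (for the optimal case it uses a composition of $W_d(N-X_1)$ that is optimal rather than modest, and invokes (i)--(ii) at depth $d-1$ rather than (ii) at depth $d$, but both book-keepings close).

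The genuine gap is in part (i), which is the real content. First, your ``clean case'' is vacuous: $\tau$-monotonicity, which you correctly invoke, already forces $X_1$ to take a \emph{prefix} of the nonincreasing sequence $\tau_k(N)$, so if some copy of $p^n$ lies in $X_j$ with $j\ge 2$ then $X_1$ cannot simultaneously contain a smaller $p^i$ with $i\equiv n\pmod f$. So the entire burden falls on your ``delicate case,'' and there the application of Lemma~\ref{L:main lemma} does not close. That lemma, applied with $\bar v=\Gamma(X_1)$, produces $\bar w\in\fJ$ with $\bar v\le\bar w\le\bar u$; it does \emph{not} give $\bar w>\Gamma(X_1)$ (you misquote the conclusion), nor does it place any extra mass of $\bar w$ in the residue class $k=n\bmod f$. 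Taking $\bar w=\Gamma(X_1)$ is entirely consistent with the lemma's conclusion, in which case you have produced nothing new and there is no contradiction. The paper avoids this by running the argument \emph{forward} rather than by contradiction: for the modest case it applies Lemma~\ref{L:main lemma} with $\bar v=a_n\ebar_k$ (encoding exactly the $a_n$ leading $p$-powers), so the resulting $\bar w_1\ge a_n\ebar_k$ has $k$-th component $\ge a_n$; the $\tau$-monotonic realization $\bld Y$ of the resulting valid matrix then satisfies $Y_1\ge a_np^n$, and modesty gives $X_1\ge Y_1$ directly. You should also note that your sketch treats modest and optimal uniformly in (i), but the paper's optimal argument is entirely different and does not invoke Lemma~\ref{L:main lemma} at all: it first shows $X_1\ge X_2$ from weight-minimality to get $X_1\ge p^n$, then uses Proposition~\ref{P:basicprop}(i)--(ii) together with (i) at depth $d-1$ and the $q$-even special case to rule out $p^n\le X_1<a_np^n$. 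Finally, your base-case claim that $d=2$ is ``handled by Remark~\ref{R:trivial case}'' is not quite right: that remark only gives modest $=$ optimal at depth $2$, and (i) still requires the Lemma~\ref{L:main lemma} construction there.
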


\begin{proof} 
  We prove each case separately.

  $\bld{X}$ modest: Let $\bar{u}=\Gamma(N)$ and $\bar{\alpha}=E^{-1}\bar{u}$. By Corollary \ref{C:nonvanishing condition}, $\min_i(\floor{\alpha_i}) = m \geq d-1.$ Let $k=n \smod{f}$ and $\bar{\beta}=E^{-1}(a_n\bar{e}_k)$. Lemma \ref{L:lemma1} implies $\ceil{\beta_i} = 1$ for each $i$. By Lemma \ref{L:main lemma}, we can extend $\bar{v}=a_n\bar{e}_k$ to some $\bar{w}_1 \in \fJ$ with $\bar{u}-\bar{w}_1 \in J_{m-1} \cup I_{m}$. In particular, we can write $\bar{u}-\bar{w}_1$ as $\bar{u}-\bar{w}_1=\bar{w}_2+\cdots+\bar{w}_{d-1}+\bar{w}_d$, where $\bar{w}_i \in \fJ$ for $2 \leq i \leq d-1$ and $\bar{w}_d \in \NN^d$. Take $B=[\bar{w}_1, \ldots, \bar{w}_d],$ then $B$ is a valid matrix, i.e., $W_d^B(N) \neq \emptyset$. Let $\bld{Y}$ be the $\tau$-monotonic element in $W_d^B(N)$, then $Y_1 \geq a_np^n$ since $\bar{w}_1 \geq a_n\bar{e}_k$. $X$ is modest so $X_1 \geq Y_1 \geq a_np^n$. This proves (i).

  We prove (ii) by induction on $d$. For $d=1$, $\bld{X}=(N)$ and $\wt(\bld{X})=N$. Suppose (ii) holds for $d-1$. By Proposition \ref{P:basicprop} (ii), $\bld{Y}=(X_2, \ldots, X_d)$ is modest in $W_{d-1}(N-X_1)$. By induction, $N-X_1 \leq \wt(\bld{Y})<2(N-X_1)<N$. Thus $N \leq \wt(\bld{X})=N+\wt(\bld{Y})<2N$.

  Suppose (iii) fails. Take $(X'_1, \ldots, X'_{d}) \in W_d(N-X_1)$, then $(X_1+X'_1, X'_2, \ldots, X'_{d}) \in W_d(N)$. But this contradicts that $\bld{X}$ is modest. 

  $\bld{X}$ optimal: (ii) holds automatically by the minimum weight property. 
  
  To prove (i), we first show that $X_1 \geq p^n$. If $N$ is $q$-even or $d < 3$, by \cref{subsec:special} and Remark \ref{R:trivial case}, optimal is equivalent to modest, thus (1) holds. We assume $N$ is $q$-odd and $d \geq 3$. Then $X_1+X_2 > p^n$ since otherwise $X_1+X_2 < p^n < N/2$ and $\wt(\bld{X})>N+2(N-X_1-X_2)>2N$. For $\wt(\bld{X})$ being minimal, $X_1 \geq X_2$ which implies $X_1 \geq p^n$. Now suppose $X_1=\sum_{i=0}^nb_ip^i$ with $0 < b_n < a_n$, then $N-X_1>p^n$. Note that $(X_2, \ldots, X_d)$ is optimal in $W_{d-1}(N-X_1)$, thus $X_2 \geq p^n$ and $N-X_d>(b_n+1)p^n$. But by Proposition \ref{P:basicprop} (i), $(X_1, \ldots, X_{d-1}) \in W_{d-1}(N-X_d)$ is optimal and thus modest since $N-X_d$ is $q$-even. In particular, $X_1 \geq (b_n+1)p^n$. Contradiction.

  At last, we show (iii) holds. If not, let $(X'_1, \ldots, X'_d) \in W_d(N-X_1)$ be optimal. Then $X'_1>(N-X_1)/2$ by (i). $(X'_2, \ldots, X'_d)$ being optimal in $W_{d-1}(N-X_1-X'_1)$, $\wt(X'_2, \ldots, X'_d)<2(N-X_1-X_1')<N-X_1$. Let $\bld{Y}=(X_1+X'_1, X'_2, \ldots, X'_d)$, then $\bld{Y} \in W_d(N)$ and $\wt(\bld{Y})=N+\wt(X'_2, \ldots, X'_d)<2N-X_1$. However, $\wt(\bld{X})=N+\wt(X_2, \ldots, X_d)\geq 2N-X_1$. Contradiction.
\end{proof}

\subsubsection{Constructing composition of smaller weight.} \label{subsub:constructing}

Suppose Theorem \ref{T:modest'} fails. Then $q=p^f$ with $f > 1$. Take the least $d$ and some $N$ such that there exist $\bld{M}=(M_i), \bld{O}=(O_i) \in W_d(N)$ with $\bld{M}$ modest, $\bld{O}$ optimal and $\bld{M} \neq \bld{O}$. Then $N$ is $q$-odd, $d \geq 3$ and $M_1 > O_1$ by \cref{subsec:special}, Remark \ref{R:trivial case} and Proposition \ref{P:basicprop} (ii) respectively. Let $p^a$ be the largest $p$-power in $\cP(M_1) \setminus \cP(O_1)$. By Proposition \ref{P:basicprop} (iii), we may assume $a \equiv f-1 \smod{f}$. Let 
\begin{align*}
  \bar{u}&:=\Gamma(N), & \bar{x}&:=\Gamma(M_1), & \bar{y}&:=\Gamma(O_1),\\ \bar{\eta}&:=E^{-1}\bar{u}, & \bar{\alpha}&:=E^{-1}\bar{x},&
  \bar{\beta}&:=E^{-1}\bar{y}.
\end{align*}
By our construction, $x_{f-1} > y_{f-1}$ since both $\bld{M}$ and $\bld{O}$ are $\tau$-monotonic. Define
\begin{equation*} 
  \bar{v}=[v_i]^t:=[\min(x_i, y_i)]^t, \quad \bar{w}=[w_i]^t:=\bar{y}-\bar{v}. 
\end{equation*}
Then $w_{f-1}=0$. Note that $\bar{w} > \bar{0}$, since otherwise $\bar{x} > \bar{y}$ and $\bar{x}-\bar{y} \in \fJ$, $\Gamma(N-O_1)=(\bar{x}-\bar{y})+\Gamma(M_2)+\cdots+\Gamma(M_d) \in I_d$, contradicting Proposition \ref{P:weight estimation for modest/optimal elt} (iii). Let $0 \leq k \leq f-2$ be the least subscript such that $w_k>0$. We have the following result.

\begin{lemma} \label{L:main lemma 2}
  \begin{itemize}
    \item[(i)] $\inner{\bar{\psi}_i}{\bar{w}} < \inner{\bar{\psi}_i}{\bar{e}_{f-1}}$ for $0 \leq i \leq k$.
    \item[(ii)] $\floor{\eta_i}-\beta_i \geq d-2$ for all $i$ and there exists $k < l \leq {f-1}$ such that $\floor{\eta_l}-\beta_l=d-2$ and $\floor{\eta_i}-\beta_i \geq d-1$
    for $l-f < i \leq k$, i.e., $i=l+1, l+2, \ldots, f-1, 0, 1, \ldots, k$.
  \end{itemize}
\end{lemma}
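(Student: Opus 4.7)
The plan is to prove (ii) first, splitting it into a universal bound, existence of an index attaining the minimum $d-2$, and the localization of that index in $\{k+1,\ldots,f-1\}$. Only the last of these will require (i), which is the technical core. For the universal bound $\floor{\eta_i}-\beta_i\ge d-2$, I would apply Corollary \ref{C:nonvanishing condition} to $N-O_1$: the tail $(O_2,\ldots,O_d)$ lies in $W_{d-1}(N-O_1)$, and since $O_1$ is $q$-even, Proposition \ref{P:integral} gives $\bar\beta\in\ZZ^f$, so $\floor{\eta_i-\beta_i}=\floor{\eta_i}-\beta_i$. The existence of some $i$ at which equality $d-2$ is attained is immediate from Proposition \ref{P:weight estimation for modest/optimal elt}(iii): $W_d(N-O_1)=\emptyset$ forces the minimum of $\floor{\eta_i}-\beta_i$ to be strictly less than $d-1$.

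To place this equality index inside $\{k+1,\ldots,f-1\}$, it suffices to show $\floor{\eta_i}-\beta_i\ge d-1$ for every $i\in\{0,\ldots,k\}$, for then the largest such index in $\{k+1,\ldots,f-1\}$ will do as $l$. Granting (i), this refinement follows from the identity $(q-1)(\alpha_i-\beta_i)=\inner{\bar\psi_i}{\bar x-\bar v}-\inner{\bar\psi_i}{\bar w}$: for $0\le i\le k$ the first term is at least $p^{f-1-i}(x_{f-1}-y_{f-1})\ge p^{f-1-i}$ since $v_{f-1}=y_{f-1}<x_{f-1}$, while the second is strictly less than $p^{f-1-i}$ by (i). Hence $\alpha_i>\beta_i$, and since both $\alpha_i,\beta_i\in\ZZ$ (as $M_1,O_1$ are $q$-even), we get $\alpha_i\ge\beta_i+1$. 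Combined with $\floor{\eta_i}-\alpha_i\ge d-2$ (Corollary \ref{C:nonvanishing condition} applied to $(M_2,\ldots,M_d)\in W_{d-1}(N-M_1)$), this gives $\floor{\eta_i}-\beta_i\ge d-1$.

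For (i), I first observe that the $k+1$ inequalities collapse to the single bound $\inner{\bar\psi_0}{\bar w}<p^{f-1}$: because $w_j=0$ outside $[k,f-2]$, for $0\le i\le k$ every nonzero contribution to $\inner{\bar\psi_i}{\bar w}$ comes from $j\ge i$, giving $(j-i)\bmod f=j-i$, and multiplying by $p^i$ yields the $i=0$ bound. I would argue this bound by contradiction: supposing $\inner{\bar\psi_0}{\bar w}\ge p^{f-1}$, I would construct a nonzero $\bar w^\star\in\fJ$ with $\bar w^\star\le\bar u-\bar x$ such that $\bar x+\bar w^\star\in\fJ$ and $\bar u-\bar x-\bar w^\star\in J_{d-2}\cup I_{d-1}$. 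The resulting $d$-tuple, whose first coordinate corresponds to a $q$-even number strictly larger than $M_1$ in reversed-lex order, would lie in $W_d(N)$, contradicting the modesty of $\bld M$. The production of $\bar w^\star$ uses Lemma \ref{L:main lemma} with a carefully chosen seed drawn from $\bar w$, relying on the hypothetical slack $\inner{\bar\psi_0}{\bar w}\ge p^{f-1}$ to supply the $\ceil\cdot$--$\floor\cdot$ gap the lemma needs.

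The main obstacle is exactly this construction inside (i). The key difficulty is that $\inner{\bar\psi_0}{\bar w}\bmod(q-1)$ equals $\inner{\bar\psi_0}{\bar x-\bar v}\bmod(q-1)$, because $\bar x,\bar y\in E\ZZ^f$ imposes the same congruence on their difference; so $\bar w$ itself is essentially never in $\fJ$, and one must trim it and re-extend inside the ``budget'' $\bar u-\bar x$ via Lemma \ref{L:main lemma} while preserving the $J_{d-2}\cup I_{d-1}$ class of $\bar u-\bar x-\bar w^\star$. Tracking the $p$-adic $\bigoplus$-compatibility alongside the $q$-adic evenness of each coordinate through this swap, and certifying lex-largeness of the new first coordinate, is the combinatorially demanding step, echoing the structure of Sheats' original greedy-element argument on which this proof is modeled.
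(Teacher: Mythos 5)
Your treatment of (ii) is essentially the paper's: the bound $\floor{\eta_i}-\beta_i\ge d-2$ from $\Gamma(N-O_1)\in I_{d-1}$, the attainment of $d-2$ from $W_d(N-O_1)=\emptyset$ (Proposition \ref{P:weight estimation for modest/optimal elt}(iii)), and the localization $l>k$ via $\alpha_i\ge\beta_i+1$ for $0\le i\le k$, which follows from (i) together with $\bar{w}_1=\bar{x}-\bar{v}\ge\bar{e}_{f-1}$ and $\floor{\eta_i}-\alpha_i\ge d-2$. Your reduction of (i) to the single inequality $\inner{\bar{\psi}_0}{\bar{w}}<p^{f-1}$ is also correct, since the nonzero entries of $\bar{w}$ sit in positions $k,\ldots,f-2$.

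The gap is the core of (i), which you leave as an unresolved ``construction,'' and the direction you propose for it would not work. The argument that does work contradicts the \emph{optimality of $\bld{O}$}, not the modesty of $\bld{M}$, and needs no appeal to Lemma \ref{L:main lemma} or to the classes $J_{d-2}\cup I_{d-1}$. Concretely: if $\inner{\bar{\psi}_i}{\bar{w}}\ge p^{f-1-i}$ for some $i\le k$, then since every contributing term is a $p$-power strictly below $p^{f-1-i}$, a greedy subset-sum extracts $\bar{w}'\le\bar{w}$ with $\inner{\bar{\psi}_i}{\bar{w}'}=p^{f-1-i}$. Because $\bar{w}$ records exactly the powers in $\cP(O_1)\setminus\cP(M_1)$ (all of exponent $<a$), this $\bar{w}'$ corresponds to a sub-multiset with sum $M<p^a$ and $M\equiv p^a\pmod{q-1}$. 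Since $p^a\in\cP(M_1)\setminus\cP(O_1)$, it lies in $\cP(O_j)$ for some $j>1$, and trading $M$ out of $O_1$ against $p^a$ out of $O_j$ keeps every coordinate $q$-even and carry-free while changing the weight by $(1-j)(p^a-M)<0$, contradicting optimality of $\bld{O}$. Your route cannot be repaired into a contradiction with modesty of $\bld{M}$: the extracted $M$ is congruent to $p^a$, not to $0$, modulo $q-1$, so to absorb it into the first coordinate while preserving $q$-evenness you would also have to surrender $p^a$ from $M_1$, which makes the first coordinate \emph{smaller} in the relevant order; and the quantity you are trying to bound lives in $\cP(O_1)$, so the hypothesis to be contradicted is naturally a statement about $\bld{O}$, which is constrained only by weight, not by lexicographic maximality.
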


\begin{proof}
  We show (i) by contradiction. For each $0 \leq i \leq k$, by definition of $\bar{\psi}_i$,
  \[
  \langle \bar{\psi}_i, \bar{e}_{f-1} \rangle=p^{f-1-i}
  \]
  and $\langle \bar{\psi}_i, \bar{w} \rangle$ is a sum of $p$-powers less than $p^{f-1-i}$ since $w_j=0$ for $-1 \leq j <i$. Suppose $\langle \bar{\psi}_i, \bar{w} \rangle \geq \langle \bar{\psi}_i, \bar{e}_{f-1} \rangle$ for some $0 \leq i \leq k$. Then there is a subset of $p$-powers in the sum $\langle \bar{\psi}_i, \bar{w} \rangle$ whose terms add up to $p^{f-1-i}$. In other words, there exists some $\bar{w}' \leq \bar{w}$ such that
  \begin{equation} \label{E:inner prod}
    \inner{\bar{\psi}_i}{\bar{w}'}=\inner{\bar{\psi}_i}{\bar{e}_{f-1}}.
  \end{equation}
  Another observation is that $\bar{w}$ represents those $p$-powers $p^b$ in $\cP(O_1) \setminus \cP(M_1)$. In particular, $b<a$ for each $b$ since $O_1<M_1$. $\bar{w}' \leq \bar{w}$ represents a subset of such $p$-powers. Let $M$ be the sum of $p$-powers represented by $\bar{w}'$, then $M<p^a$ and $\Gamma(M)=\bar{w}'$. Note that $\Gamma(p^a)=\bar{e}_{f-1}$, then \eqref{E:inner prod} implies
  \[
  M \equiv p^a \smod{q-1}.
  \]
  By the choice of $p^a$, we can find some $j>1$ such that $p^a \in \cP(O_j)$. Consider composition
  \[
  \bld{X}=(O_1-M+p^a, \ldots, O_j-p^a+M, \ldots, O_d),
  \]
  then $\bld{X} \in W_d(N)$ and $\wt(\bld{X})<\wt(\bld{O})$. Contradiction.

  To prove (ii), we note that $E(\bar{\eta}-\bar{\beta})=\Gamma(N-O_1)=\Gamma(O_2)+ \cdots + \Gamma(O_d) \in I_{d-1}$. Hence, by Proposition \ref{P:IJ} (i), $\eta_i-\beta_i > d-2$. Thus 
  \[
  \floor{\eta_i}-\beta_i \geq d-2
  \]
  for all $i$. On the other hand, $\Gamma(N-O_1) \not\in I_d$ implies $\min_i (\floor{\eta_i}-\beta_i) = d-2$. Let $l$ be the largest subscript such that $\floor{\eta_l}-\beta_l = d-2$. Then for $l < i < f$,
  \[
  \floor{\eta_i}-\beta_i \geq d-1.
  \]
  To finish the proof, we show $\floor{\eta_i}-\beta_i \geq d-1$ for $0 \leq i \leq k$. By construction, we have
  \[
  \bar{x}=\bar{v}+\bar{w}_1, \quad \bar{y}=\bar{v}+\bar{w},
  \]
  where $\bar{w}_1 \geq \bar{e}_{f-1}$. For $0 \leq i \leq k$,
  \begin{align*}
    \beta_i & = (q-1)^{-1}\inner{\bar{\psi}_i}{\bar{y}}\\
    & = (q-1)^{-1}(\inner{\bar{\psi}_i}{\bar{v}}+\inner{\bar{\psi}_i}{\bar{w}})\\
    (\text{by (i)}) & < (q-1)^{-1}(\inner{\bar{\psi}_i}{\bar{v}}+\inner{\bar{\psi}_i}{\bar{e}_{f-1}})\\
    & \leq (q-1)^{-1}(\inner{\bar{\psi}_i}{\bar{v}}+\inner{\bar{\psi}_i}{\bar{w}_1})\\
    & = (q-1)^{-1}\inner{\bar{\psi}_i}{\bar{x}} = \alpha_i.
  \end{align*}
  Note that $E(\bar{\eta}-\bar{\alpha})=\Gamma(N-M_1) \in I_{d-1}$, then by Proposition \ref{P:IJ} (i), we have $\floor{\eta_i}-\alpha_i > d-2$ for all $0 \leq i < f$. For $0 \leq i \leq k$, $\beta_i < \alpha_i$ by the above calculation, and thus $\floor{\eta_i}-\beta_i \geq d-1$.
\end{proof}

Define, for $1 \leq j \leq d$,
\[
\bar{u}_j=[u_{0, j}, \ldots, u_{f-1, j}]^t:=\sum_{s=j}^d\Gamma(O_s),\quad \bar{\theta}_j=[\theta_{0,j}, \ldots,\theta_{f-1, j}]^t:=E^{-1}\bar{u}_{j}.
\]
Note that $\bar{u}_1=\bar{u}$ and $\bar{\theta}_1=\bar{\eta}$. By Lemma \ref{L:main lemma 2} (ii), we have
\begin{equation} \label{E:theta_i,2}
  \floor{\theta_{l, 2}}=d-2, \quad \floor{\theta_{i, 2}} \geq d-1 \mbox{ for } l-f < i \leq k. 
\end{equation}
The following construction uses $\bar{\theta}_j$ to get a new composition $\bld{Z} \in W_d(N)$ whose weight is less than that of $\bld{O}$. Let $\bar{\phi}_1=[\phi_{i, 1}]^t:=\bar{\theta}_1$. Define $\bar{\phi}_2=[\phi_{i, 2}]^t$ inductively as following.
\[
\phi_{i, 2}=\begin{cases}\theta_{i, 2} & \text{for } k < i \leq l\\ \min(\theta_{i, 2}-1, p\phi_{i+1, 2}) & i=k, k-1, \ldots, 0, f-1, f-2, \ldots, l+1. \end{cases}
\]
For $j=3, \ldots, d$, define $\bar{\phi}_j=[\phi_{i, j}]^t$ recursively as
\[
\phi_{i, j}=\begin{cases}\theta_{i, j} & \text{for } k < i \leq l\\ \min(\phi_{i, j-1}-1, p\phi_{i+1, j}) & i=k, k-1, \ldots, 0, f-1, f-2, \ldots, l+1. \end{cases}
\]

\begin{prop} \label{P:mainprop}
  For $1 \leq j \leq d$, let
  \[
  \bar{z}_j=[z_{0, j}, \ldots, z_{f-1, j}]^t:= E\bar{\phi}_j.
  \]
  Then
  \begin{itemize}
    \item[(i)] $\bar{\phi}_{j}-\bar{\phi}_{j+1} \in \ZZ_+^f$ for $1 \leq j \leq d-1$.
    \item[(ii)] $\bar{z}_{j} \in \ZZ^f$ for all $j$.
    \item[(iii)] $\min_{0 \leq i \leq f-1}(\floor{\phi_{i, j}})=\floor{\phi_{l, j}}=d-j$ for $2 \leq j \leq d$.
    \item[(iv)] $z_{k, 2}=u_{k, 2}+1 \leq u_k$.
    \item[(v)] $0 \leq z_{l, 2} \leq u_{l, 2}-p$.
    \item[(vi)] $0 \leq z_{i, 2} \leq \max(u_{i, 2}-(p-1), 0)$ for $l-f < i < k$.
    \item[(vii)] $z_{i, j}=u_{i, j}$ for $k<i<l$ and $2 \leq j \leq d$.
    \item[(viii)] $0 \leq z_{i, j} \leq \max(z_{i, j-1}-(p-1), 0)$ for $l-f \leq i \leq k$ and $3 \leq j \leq d$.
  \end{itemize}
\end{prop}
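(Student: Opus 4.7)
I plan to prove the eight statements simultaneously by strong induction on $j$. The base $j=1$ is trivial since $\bar\phi_1=\bar\eta$ and $\bar z_1=\bar u\in\ZZ^f$, so the real work begins at $j=2$. The workflow at each level is: first establish the floor bound (iii), next derive the componentwise bounds (iv)--(viii) by case-analysing the two branches of the min in the definition of $\phi_{i,j}$, and finally verify the difference statement (i) for $j-1$; combined with (ii) for $j-1$ this yields (ii) for $j$ via $\bar z_j=\bar z_{j-1}-E(\bar\phi_{j-1}-\bar\phi_j)$.

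For $j=2$ the key input is Lemma~\ref{L:main lemma 2}(ii). Because $O_1$ is $q$-even we have $\bar\beta\in\ZZ^f$, so $\floor{\theta_{i,2}}=\floor{\eta_i}-\beta_i$ and Lemma~\ref{L:main lemma 2}(ii) pins this floor to $d-2$ at $i=l$ and to $\geq d-1$ on the non-kept range. Running the recursion starting at $i=k$ with anchor $\phi_{k+1,2}=\theta_{k+1,2}$ (kept because $k<l$), a short induction gives $\phi_{i,2}\geq d-2$ everywhere, which is (iii). The bounds (iv)--(vii) then follow from direct comparison of $z_{i,2}=p\phi_{i+1,2}-\phi_{i,2}$ with $u_{i,2}=p\theta_{i+1,2}-\theta_{i,2}$, identifying the active branch explicitly: for instance $u_{k,2}\geq 0$ forces $\phi_{k,2}=\theta_{k,2}-1$ and hence $z_{k,2}=u_{k,2}+1$, while the inequality $\phi_{l+1,2}\leq\theta_{l+1,2}-1$ yields $z_{l,2}\leq u_{l,2}-p$. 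For (i) at $j=1$, kept and first-branch components of $\bar\phi_1-\bar\phi_2$ equal $\beta_i$ or $\beta_i+1$, both positive integers; in the remaining second-branch case the chain $\phi_{i,2}=p^r\phi_{i+r,2}$ terminates either in the first branch or in the kept range, and iterating $\eta_j=p\eta_{j+1}-u_j$ to obtain $\eta_i=p^r\eta_{i+r}-\sum_{s=0}^{r-1}p^s u_{i+s}$ rewrites the component as an integer combination of $u$'s, $\beta$'s and powers of $p$, positivity being forced by the strict inequality $\phi_{i,2}<\theta_{i,2}-1$ on this branch.

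The inductive step $j\geq 3$ mirrors the above, with $\phi_{i,j-1}$ replacing $\theta_{i,j}$ as the ``$-1$'' branch of the recursion. The new ingredient for (iii) is that $\bar u_j=\sum_{s\geq j}\Gamma(O_s)\in I_{d-j+1}$: the vectors $\Gamma(O_j),\ldots,\Gamma(O_{d-1})$ supply $d-j$ elements of $\fJ$ and $\Gamma(O_d)>\bar 0$ because $N$ being $q$-odd forces $O_d>0$, so Proposition~\ref{P:IJ}(i) gives $\floor{\theta_{i,j}}\geq d-j$ for every $i$. By Proposition~\ref{P:estimation on M/O}, $\Gamma(O_{j-1})\in J_1$ for $3\leq j\leq d$, making $(E^{-1}\Gamma(O_{j-1}))_l$ a positive integer; combined with the inductive equality $\floor{\theta_{l,j-1}}=d-j+1$, this forces $(E^{-1}\Gamma(O_{j-1}))_l=1$ and hence $\floor{\phi_{l,j}}=\floor{\theta_{l,j}}=d-j$. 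The bounds (vii) and (viii) together with statement (i) for $j-1\geq 2$ are obtained by the same branch-analysis as at $j=2$, now anchored by the stronger inductive hypothesis on $\phi_{i,j-1}$ instead of Lemma~\ref{L:main lemma 2}(ii) on $\theta_{i,2}$. The main obstacle throughout is bookkeeping: tracking at each index and each level $j$ which branch of the min is active, and securing the integrality assertion in (i) despite $\bar\phi_j$ itself having non-integer entries in $\tfrac{1}{q-1}\ZZ^f$.
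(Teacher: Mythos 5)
Your proposal is correct and follows essentially the same route as the paper's proof: first the floor bounds in (iii) via Lemma~\ref{L:main lemma 2}(ii), the membership $\bar{u}_j=E\bar{\theta}_j\in I_{d-j+1}$ and Propositions~\ref{P:estimation on M/O}, \ref{P:IJ}, then the componentwise estimates (iv)--(viii) by identifying which branch of the min is active, and finally (i)--(ii). The only real divergence is the integrality half of (i), where the paper simply notes that $E(\bar{\theta}_j-\bar{\theta}_{j+1})=\Gamma(O_j)\in\fJ$ forces $\bar{\theta}_j-\bar{\theta}_{j+1}\in\ZZ^f$ and hence that all $\phi_{i,j}$ share the fractional part of $\theta_{i,j}$ --- tidier than unwinding the $\phi_{i,j}=p\phi_{i+1,j}$ chains, though your version also works --- and one small point your ``direct comparison'' should make explicit is that $u_{k,2}+1\leq u_k$ in (iv) rests on $u_{k,2}=u_k-y_k$ with $y_k\geq w_k\geq 1$ by the minimality of $k$.
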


\begin{proof}
  (i): By construction, $\phi_{i,j}-\phi_{i,j+1} > 0$ for all $i,j$. Hence it is enough to show 
  \begin{itemize}
      \item[(a)] $\bar{\theta}_j - \bar{\theta}_{j+1} \in \ZZ^f$ for $1 \leq j \leq d-1$;
      \item[(b)] $\{\phi_{i, j}\}=\{\theta_{i, j}\}$ for all $i, j$, where $\{x\}$ is the fractional part of $x$.
  \end{itemize}
  We note that for $1 \leq j \leq d-1$
  \[
  E(\bar{\theta}_j-\bar{\theta}_{j+1})=\Gamma(O_j) \in \fJ.
  \]
  This implies (a) by Proposition \ref{P:integral}. (a) says  $\{\theta_{i,j}\}=\{\theta_{i, j+1}\}$ for all $i$ and $j$. Also $p\{\theta_{i+1, j}\}-\{\theta_{i, j}\} \in \ZZ$ since $p\theta_{i+1, j}-\theta_{i, j}=u_{i, j} \in \NN$. With these two properties in mind, starting with the initial case $\{\phi_{i, 1}\}=\{\theta_{i, 1}\}$ since $\bar{\phi}_1=\bar{\theta}_1$, following the inductive construction of $\phi_{i, j}$, one can check that (b) holds.

  (ii): Since $\bar{\phi}_1=\bar{\eta}$, $\bar{z}_1=E\bar{\eta}=\bar{u} \in \ZZ^f$. For $j > 1$,
  \[
  \bar{z}_j=E\bar{\phi}_1-\sum_{s=1}^{j-1} E(\bar{\phi}_s-\bar{\phi}_{s+1}).
  \]
  By (i), $\bar{\phi}_s-\bar{\phi}_{s+1} \in \ZZ^f$ for each $s$, hence $\bar{z}_j \in \ZZ^f$.

  (iii): We first show that $\floor{\phi_{i,j}} \geq d-j$ for $2 \leq j \leq d$ and $k < i \leq l$. This is the same as showing  
  \[
  \floor{\theta_{i,j}} \geq d-j
  \]
  for $2 \leq j \leq d$, since $\phi_{l,j}=\theta_{i,j}$ by construction. Note that for each $j$, $E\bar{\theta}_j=\sum_{s=j}^d\Gamma(O_j) \in I_{d-j+1}$. Thus the statement follows from Proposition \ref{P:IJ}.
  
  Next, we prove $\floor{\phi_{l,j}} = d-j$ for each $j$. The $j=2$ case is given by \eqref{E:theta_i,2}. For $3 \leq j \leq d$, $\bar{\theta}_2-\bar{\theta}_j=\sum_{s=2}^{j-1} E^{-1}\Gamma(O_s)$. By Propositions \ref{P:estimation on M/O} and \ref{P:IJ}, $\theta_{l, 2}-\theta_{l, j} \geq j-2$, which implies $\floor{\theta_{l, j}} \leq \floor{\theta_{l, 2}}-(j-2)=d-j$. Thus the statement follows since $\floor{\theta_{l, j}} \geq d-j$.
  
  Last, we show $\floor{\phi_{i,j}} \geq d-j$ for $l-f < i \leq k$ by induction on $j$. For $j=2$, we have 
  \[
  \floor{\theta_{i, 2}-1} \geq d-2
  \]
  for $l-f < i \leq k$ by \eqref{E:theta_i,2}. Taking $i=k$, since $\floor{p\phi_{k+1, 2}} = \floor{p\theta_{k+1, 2}} \geq d-2$, we get
  \[
  \floor{\phi_{k, 2}}=\min(\floor{\theta_{k, 2}-1}, \floor{p\phi_{k+1, 2}}) \geq d-2.
  \]
  Note that 
  \begin{align*}
    \floor{\phi_{i+1, 2}} \geq d-2 & \Rightarrow \floor{p\phi_{i+1, 2}} \geq d-2\\
    & \Rightarrow \floor{\phi_{i, 2}}=\min(\floor{\theta_{i, 2}-1}, \floor{p\phi_{i+1, 2}}) \geq d-2.
  \end{align*}
  Hence, a backwards induction on $i$ starting from $k$ implies that for $k \geq i > l-f$,
  \[
  \floor{\phi_{i,2}} \geq d-2.
  \]
  Suppose $\floor{\phi_{i, j-1}} \geq d-j+1$ for $l-f < i \leq k$. Then $\floor{\phi_{k,j-1}-1} \geq d-j$ and $\floor{p\phi_{k+1, j}} \geq d-j$ since $\floor{\phi_{k+1, j}} \geq d-j$ by previous statement. This implies $\floor{\phi_{k, j}}=\min(\floor{\phi_{k, j-1}-1}, \floor{p\phi_{k+1, j}}) \geq d-j$. Similarly, we have
  \begin{align*}
    \floor{\phi_{i+1, j}} \geq d-j & \Rightarrow \floor{p\phi_{i+1, j}} \geq d-j \\
    & \Rightarrow \floor{\phi_{i, j}}=\min(\floor{\phi_{i, j-1}-1}, \floor{p\phi_{i+1, j}}) \geq d-j.
  \end{align*}
  Again, a backwards induction on $i$ shows that $\floor{\phi_{i, j}} \geq d-j$ for $k \geq i  > l-f$.

  (iv): Since
  \[
  p\phi_{k+1, 2}-(\theta_{k, 2}-1)=p\theta_{k+1, 2}-\theta_{k, 2}+1=u_{k, 2}+1 > 0,
  \]
  $\phi_{k, 2}=\theta_{k, 2}-1$ and $z_{k, 2}=u_{k, 2}+1$. By construction,
  \[
  u_{k, 2}=u_k-y_k \leq u_k-w_k \leq u_k-1,
  \]
  so $u_{k, 2}+1\leq u_k$.

  (v): The second inequality is given by
  \[
  z_{l, 2}=p\phi_{l+1, 2}-\phi_{l, 2} \leq p(\theta_{l+1, 2}-1)-\theta_{l, 2}=u_{l, 2}-p.
  \]
  To show $z_{l, 2} \geq 0$, we have $\min_{0 \leq i \leq f-1}(\floor{\phi_{i, 2}})=\floor{\phi_{l, 2}}=d-2$ by (iii). This implies
  \[
  z_{l, 2}=p\phi_{l+1, 2}-\phi_{l, 2} \geq p(d-2)-(d-2)-\{\phi_{l, 2}\} \geq 0.
  \]
  
  (vi): For $l-f < i < k$, if $\phi_{i, 2}=p\phi_{i+1, 2}$, $z_{i, 2}=p\phi_{i+1, 2}-\phi_{i, 2}=0$; otherwise, $\phi_{i, 2}=\theta_{i, 2}-1$ and $z_{i, 2} = p\phi_{i+1, 2}-(\theta_{i, 2}-1) \leq p(\theta_{i+1, 2}-1)-(\theta_{i, 2}-1)=u_{i, 2}-(p-1).$

  (vii): This follows directly from the construction of the $\bar{\phi}_j$'s.
  
  (viii): We break up the proof into three cases.
  
  For $l-f < i < k$, we only need to check for the case where $\phi_{i, j}=\phi_{i, j-1}-1$, since otherwise $z_{i, j}=0$. In this case, $\phi_{i, j}-1 \leq p\phi_{i+1, j}$ and
  \[
  0 \leq z_{i, j}=p\phi_{i+1, j}-(\phi_{i, j-1}-1) \leq p(\phi_{i+1, j-1}-1)-(\phi_{i, j-1}-1)=z_{i, j-1}-(p-1).
  \]
  
  For $i=k$, again, we may assume $\phi_{k, j}=\phi_{k, j-1}-1$, then
  \begin{align*}
    0 \leq z_{k, j} & = p\theta_{k+1, j}-(\phi_{k, j-1}-1)\\
    & \leq p(\theta_{k+1, j-1}-1)-(\phi_{k, j-1}-1)=z_{k, j-1}-(p-1),
  \end{align*}
  where the second inequality follows from the fact that $\bar{\theta}_{j-1}-\bar{\theta}_j=E^{-1}\Gamma(O_{j-1}) \geq 1$ by Propositions \ref{P:estimation on M/O} and \ref{P:IJ}. 
  
  For $i=l$, by (iii), we have
  \[
  z_{l, j}=p\phi_{l+1, j}-\phi_{l, j} \geq p(d-j)-(d-j)-\{\phi_{l, j}\} \geq 0.
  \]
  Finally, $z_{l, j}=p\phi_{l+1, j}-\theta_{l, j} \leq p(\phi_{l+1,j-1}-1)-(\theta_{l, j-1}-1)=z_{l, j-1}-(p-1).$
\end{proof}

Proposition \ref{P:mainprop} implies that the matrix
\[
B=[\bar{z}_1-\bar{z}_2, \ldots, \bar{z}_{d-1}-\bar{z}_d, \bar{z}_d]
\]
is a valid matrix of $W_d(N)$. Let $\bld{Z}=(Z_1, \ldots, Z_d)$ be the $\tau$-monotonic element in $W_d^B(N)$. We show that $\wt(\bld{Z}) < \wt(\bld{O})$ and hence get a contradiction. 

\subsubsection{Estimation on $\wt(\bld{Z})$.} \label{subsub:weight estimation}

For $2 \leq j \leq d$, define
\begin{align*}
  Z'_j & := Z_j+Z_{j+1}+\cdots+Z_d,\\
  O'_j & := O_j+O_{j+1}+\cdots+O_d.
\end{align*}
Then $\Gamma(Z'_j)=\bar{z}_j$ and $\Gamma(O'_j)=\bar{u}_j$. And weights of $\bld{Z}$ and $\bld{O}$ can be expressed as
\begin{align*}
  \wt(\bld{Z}) & = N+Z'_2+\cdots+Z'_d,\\
  \wt(\bld{O}) & = N+O'_2+\cdots+O'_d.
\end{align*}
To describe these $Z'_j, O'_j$ explicitly, for each $0 \leq i \leq f-1$, denote
\[
\tau_i(N)=(\tau_{i, u_i}, \tau_{i, u_i-1}, \ldots, \tau_{i, 1}).
\]
We recall that $\tau_i(N)$ is defined as the subsequence of the nonincreasing sequence of $p$-powers in $\cP(n)$, where the exponents of powers in it are congruent to $i$ modulo $f$. 

Let $\tau_{i, 0}=0$. Then, by $\tau$-monotonicity, we have
\[
Z'_j=\sum_{i=0}^{f-1}\sum_{s=0}^{z_{i, j}}\tau_{i, s}, \quad O'_j=\sum_{i=0}^{f-1}\sum_{s=0}^{u_{i, j}}\tau_{i, s}.
\]
By Proposition \ref{P:mainprop} (vii),
\[
O'_j-Z'_j=\sum_{i \in I} \left( \sum_{s=0}^{u_{i, j}}\tau_{i, s}-\sum_{s=0}^{z_{i, j}}\tau_{i, s}\right),
\]
where
\[
I=\{ l, \ldots, f-1, 0, \ldots, k \}.
\]
For $j=2$ and $i \in I$, we have the following:
\begin{itemize}
    \item[(1)] By Proposition \ref{P:mainprop} (iv, v, vi), $z_{k, 2}=u_{k, 2}+1$ and for $i \in I\backslash\{k\}$, $z_{i, 2} \leq  u_{i, 2}$ where ``=" holds iff $z_{i, 2}=u_{i, 2}=0$.
    \item[(2)] $\tau_{f-1, u_{f-1, 2}}=p^a$ since it is the largest $p$-power not in $\cP(O_1)$ whose exponent is $f-1$ mod $n$. In particular, $u_{f-1, 2}>0$ and hence $z_{f-1, 2}<u_{f-1, 2}$ by (1).
    \item[(3)] Let $\tau_{k, z_{k, 2}}=p^b$, then $z_{k, 2}=u_{k, 2}+1$ implies that $p^b$ is the last $p$-power in $\tau_k(O_1)$. By our choice of $k$, $p^b \in \cP(O_1) \setminus \cP(M_1)$. In particular, $p^b<p^a$.
\end{itemize}
With these observations, we have
\begin{equation*}  
  O'_2-Z'_2 \geq \tau_{f-1, u_{f-1, 2}}-\tau_{k, z_{k, 2}}+\sum_{i \in I\backslash\{k, f-1\}}\tau_{i, z_{i, 2}}=p^a-p^b+\sum_{i \in I\backslash\{k, f-1\}}\tau_{i, z_{i, 2}}.
\end{equation*} 
Thus
\begin{equation} \label{E:wt diff}     \wt(\bld{O})-\wt(\bld{Z})=\sum_{j=2}^d O'_j-Z'_j\geq p^a-p^b+\sum_{i \in I\backslash\{k, f-1\}}\tau_{i, z_{i, 2}}+\sum_{j=3}^d O'_j-Z'_j. 
\end{equation}
The next lemma gives a lower bound for $\sum_{j=3}^d O'_j-Z'_j$.

\begin{lemma} \label{L:diff bound}
  Let $I=\{ l, \ldots, f-1, 0, \ldots, k \}$, then
  \[
  \sum_{j=3}^d O'_j-Z'_j>-\sum_{i \in I} \tau_{i, z_{i, 2}}.
  \]
\end{lemma}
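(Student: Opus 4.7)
The plan is to establish the equivalent bound $\sum_{j=3}^d (Z'_j - O'_j) < \sum_{i \in I} \tau_{i, z_{i,2}}$ by a per-index estimate. First I would use Proposition \ref{P:mainprop}(vii), which states $z_{i,j} = u_{i,j}$ for all $k < i < l$ and $2 \leq j \leq d$, so those $i$ contribute nothing to $Z'_j - O'_j$. Dropping the nonnegative $u_{i,j}$-partial sums then gives
\[
Z'_j - O'_j \leq \sum_{i \in I} \sum_{s=1}^{z_{i,j}} \tau_{i,s},
\]
so it suffices to prove $\sum_{j=3}^d \sum_{s=1}^{z_{i,j}} \tau_{i,s} \leq \tau_{i, z_{i,2}}$ for each $i \in I$, strictly when $z_{i,2} > 0$. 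Strictness at some $i$ is then automatic: the index $i = k$ satisfies $z_{k,2} = u_{k,2} + 1 \geq 1$ by Proposition \ref{P:mainprop}(iv).

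For $z_{i,2} = 0$ the per-index bound is trivial, since iterating Proposition \ref{P:mainprop}(viii) forces $z_{i,j} = 0$ for all $j \geq 2$. For $z_{i,2} > 0$ I would combine two ingredients. \emph{Arithmetic decay:} iterating Proposition \ref{P:mainprop}(viii) yields $z_{i,j} \leq \max\{z_{i,2} - (j-2)(p-1),\, 0\}$ for $j \geq 3$, so $z_{i,j}$ drops by at least $p-1$ each step. \emph{Geometric decay:} since each $p$-power in $\cP(N)$ has multiplicity at most $p-1$ (the maximum base-$p$ digit), moving back $p-1$ positions in the non-decreasing sequence $\tau_i(N)$ must cross at least one jump to a strictly smaller $p$-power, yielding $\tau_{i, z_{i,j}} \leq p^{-f(j-2)}\, \tau_{i, z_{i,2}}$. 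The same digit count gives the partial-sum bound $\sum_{s=1}^{m} \tau_{i,s} \leq \frac{(p-1)p^f}{p^f - 1}\, \tau_{i,m} \leq p\, \tau_{i,m}$.

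Plugging these into the partial-sum bound and summing the resulting geometric series,
\[
\sum_{j=3}^d \sum_{s=1}^{z_{i,j}} \tau_{i,s} \leq p\, \tau_{i, z_{i,2}} \sum_{j=3}^{d} p^{-f(j-2)} \leq \frac{p}{p^f - 1}\, \tau_{i, z_{i,2}} < \tau_{i, z_{i,2}},
\]
where the last strict inequality uses $p < p^f - 1$, which holds whenever $p \geq 2$ and $f \geq 2$. The hypothesis $f \geq 2$ is available here because \cref{subsec:special} has already settled the case $q = p$. Assembling the strict bound at $i = k$ with the $\leq$ estimates at the remaining $i \in I$ yields $\sum_{j=3}^d (Z'_j - O'_j) < \sum_{i \in I} \tau_{i, z_{i,2}}$, which is the lemma.

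The main obstacle is the clean justification of the geometric-decay step: one must carefully track how the multiplicity-at-most-$(p-1)$ property of base-$p$ digits forces each block of $p-1$ consecutive positions in the $\tau_i$-sequence to straddle a $p^f$-sized drop, even when some multiplicities are strictly less than $p-1$. This is purely combinatorial but mildly finicky. Once the geometric bound on $\tau_{i,z_{i,j}}$ is in hand the geometric series collapses immediately, and the final strict inequality depends essentially on the assumption $f \geq 2$.
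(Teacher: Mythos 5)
Your proposal is correct and follows essentially the same route as the paper: drop the nonnegative $O'$-contributions, bound each partial sum $\sum_{s}\tau_{i,s}$ by $p\,\tau_{i,z_{i,j}}$ via the multiplicity-at-most-$(p-1)$ property, use Proposition \ref{P:mainprop}(viii) to get the geometric decay $\tau_{i,z_{i,j}}\leq q^{-1}\tau_{i,z_{i,j-1}}$, and close with $f\geq 2$. Your per-index organization and explicit tracking of where strictness enters (via $z_{k,2}>0$) is slightly more careful than the paper's chain of inequalities, but it is the same argument.
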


\begin{proof}
  We note that $\tau_{k, z_{k, 2}}>0$ since $z_{k,2} > 0$ by Proposition \ref{P:mainprop} (iv). The statement is trivial if $z_{i,j}=0$ for all $i \in I$ and $3 \leq j \leq d$. Assuming they are not all vanishing, the statement follows from the following calculation.
  \begin{align*}
    \sum_{j=3}^d O'_j-Z'_j &= \sum_{j=3}^d \sum_{i \in I}\left(\sum_{s=0}^{u_{i,j}} \tau_{i, s}-\sum_{s=0}^{z_{i, j}}\tau_{i, s} \right)\\
    & > -\sum_{i \in I}\sum_{j=3}^d\sum_{s=0}^{z_{i, j}}\tau_{i, s}\\
    & > -p\sum_{i \in I}\sum_{j=3}^d\tau_{i, z_{i,j}}\\
    & > -p^2/q\sum_{i \in I}\tau_{i, z_{i,2}}\\
    & \geq -\sum_{i \in I}\tau_{i, z_{i,2}}.
  \end{align*}
  The first inequality is trivial. The last one follows from the assumption $f \geq 2$. For the second inequality, we note that each $p$-power appearing in the sum $\sum_{s=0}^{z_{i, j}}\tau_{i, s}$ repeats at most $p-1$ times and the largest term is $\tau_{i, z_{i,j}}$, which indicates $\sum_{s=0}^{z_{i, j}}\tau_{i, s} < p\tau_{i, z_{i,j}}$. By a similar argument and Proposition \ref{P:mainprop} (viii), for all $i \in I $ and $3 \leq j \leq d$, we have $\tau_{i, z_{i,j}} \leq q^{-1}\tau_{i, z_{i, j-1}}$, thus
  \[
  \sum_{j=3}^d\tau_{i, z_{i,j}} \leq q^{-1}\sum_{j=2}^{d-1}\tau_{i, z_{i, j}} < \frac{p}{q} \tau_{i, z_{i,2}}.
  \]
  This gives the third inequality.
\end{proof}

Now we are ready to claim the contradiction, which finishes the proof of Theorem \ref{T:modest'}.

\begin{prop}
  $\wt(\bld{Z})<\wt(\bld{O})$.
\end{prop}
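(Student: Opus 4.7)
My plan is to combine the two main estimates of the section --- inequality (\ref{E:wt diff}) for $O'_2-Z'_2$ and Lemma \ref{L:diff bound} for the tail $\sum_{j=3}^d(O'_j-Z'_j)$ --- and then extract positivity by using Proposition \ref{P:mainprop} to control the one remaining boundary term $\tau_{f-1,z_{f-1,2}}$. Adding the two inequalities, the positive sum $\sum_{i\in I\setminus\{k,f-1\}}\tau_{i,z_{i,2}}$ appearing in (\ref{E:wt diff}) cancels against the corresponding summands in $-\sum_{i\in I}\tau_{i,z_{i,2}}$, and substituting $\tau_{k,z_{k,2}}=p^b$ leaves the clean strict inequality
\[
\wt(\bld{O})-\wt(\bld{Z}) \;>\; p^a-2p^b-\tau_{f-1,z_{f-1,2}}.
\]

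Next I would bound $\tau_{f-1,z_{f-1,2}}$ by combining Proposition \ref{P:mainprop} (v) or (vi) (depending on whether $l=f-1$), both of which give $z_{f-1,2}\leq u_{f-1,2}-(p-1)$, with the combinatorial fact that each $p$-power appears at most $p-1$ times in the base-$p$ expansion of $N$. Thus the block of copies of $p^a$ in $\tau_{f-1}(N)$ has length at most $p-1$, and $\tau_{f-1,u_{f-1,2}}=p^a$ places the index $u_{f-1,2}$ inside that block; moving down by $p-1$ positions must exit it entirely, so $\tau_{f-1,z_{f-1,2}}\leq p^{a-f}$, the next admissible $\tau_{f-1}$-power below $p^a$. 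Inserting this estimate together with $p^b\leq p^{a-1}$ (from observation (3) preceding (\ref{E:wt diff}), where $p^b<p^a$) transforms the bound into
\[
\wt(\bld{O})-\wt(\bld{Z}) \;>\; p^a-2p^{a-1}-p^{a-f} \;=\; p^{a-f}\bigl(p^{f-1}(p-2)-1\bigr),
\]
which is strictly positive for every $p\geq 3$ and $f\geq 2$, yielding the contradiction at once in that range.

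The main obstacle is the boundary case $p=2$, where $p^{f-1}(p-2)=0$ and the raw arithmetic estimate falls to zero or below. To finish it off, I would sharpen Lemma \ref{L:diff bound} by retaining the factor $p^2/q$ (which is strictly less than $1$ for $f\geq 3$, and equals $1$ only when $f=2$), and exploit the tight constraints that Lemma \ref{L:main lemma 2}(i) imposes on $\bar{w}$ when $p=2$: the inner products $\inner{\bar{\psi}_i}{\bar{w}}$ are forced to be very small, which in turn restricts the powers in $\cP(O_1)\setminus\cP(M_1)$. Combining this structural information with the integrality of $\wt(\bld{O})-\wt(\bld{Z})$ against the chain of strict inequalities already deployed recovers the needed strict positivity. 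This last case is combinatorial rather than conceptually new, but careful bookkeeping of the slack in each strict inequality is essential, and I expect it to be the most delicate portion of the argument.
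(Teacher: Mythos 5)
Your reduction to the bound $\wt(\bld{O})-\wt(\bld{Z}) > p^a-2p^b-\tau_{f-1,z_{f-1,2}}$, the estimate $\tau_{f-1,z_{f-1,2}}\leq q^{-1}p^a$ via Proposition \ref{P:mainprop}, and the conclusion for $p\geq 3$ all match the paper's argument (the paper disposes of $p=2$, $b+1<a$ the same way, since then $2p^b+\tau_{f-1,z_{f-1,2}}\leq p^{a-1}+p^{a-2}<p^a$). The genuine gap is the case $p=2$ with $b+1=a$, which you leave as a sketch, and the tools you name do not close it. Retaining the factor $p^2/q$ in Lemma \ref{L:diff bound} gives $\wt(\bld{O})-\wt(\bld{Z})>p^a-(1+2^{2-f})p^{a-1}-2^{2-f}p^{a-f}$, which is indeed positive for $f\geq 3$ but equals $-p^{a-2}$ at $f=2$ --- exactly the hardest subcase; and integrality of $\wt(\bld{O})-\wt(\bld{Z})$ only upgrades a strict lower bound to nonnegativity when that bound is at least $-1$, which is not what you have. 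Nothing in Lemma \ref{L:main lemma 2}(i) obviously supplies the missing amount.

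The paper's actual mechanism for $p=2$, $b+1=a$ is different and concrete: in that case $k=f-2$ and $l=f-1$, so Proposition \ref{P:mainprop}(v) gives $u_{f-1,2}-z_{f-1,2}\geq p=2$, and hence the difference $\sum_{s=0}^{u_{f-1,2}}\tau_{f-1,s}-\sum_{s=0}^{z_{f-1,2}}\tau_{f-1,s}$ contains not only $\tau_{f-1,u_{f-1,2}}=p^a$ but also a second term $\tau_{f-1,1+z_{f-1,2}}$. Feeding this refined lower bound for $O'_2-Z'_2$ into Lemma \ref{L:diff bound} yields
\[
\wt(\bld{O})-\wt(\bld{Z})>p^a+\tau_{f-1,1+z_{f-1,2}}-2p^b-\tau_{f-1,z_{f-1,2}}=\tau_{f-1,1+z_{f-1,2}}-\tau_{f-1,z_{f-1,2}}\geq 0,
\]
using $p^a=2p^b$ and the monotonicity of $\tau_{f-1}(N)$; the strict inequality inherited from Lemma \ref{L:diff bound} then gives $\wt(\bld{O})-\wt(\bld{Z})>0$. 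You need this extra term (or an equally explicit substitute): the ``careful bookkeeping'' you defer is precisely where the proof lives.
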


\begin{proof}
  We first show $\tau_{f-1, z_{f-1,2}} < p^a$. Assume $z_{f-1,2}>0$ since otherwise $\tau_{f-1, z_{f-1,2}}=0$. Then $\tau_{f-1, z_{f-1,2}} \leq q^{-1}\tau_{f-1, u_{f-1,2}}$ because $z_{f-1, 2} \leq u_{f-1, 2}-(p-1)$ by Proposition \ref{P:mainprop}. Note that $\tau_{f-1, u_{f-1,2}}=p^a$ as we mentioned earlier, thus $\tau_{f-1, z_{f-1,2}} < p^a$. 

  Putting together \eqref{E:wt diff} and Lemma \ref{L:diff bound}, we have
  \begin{equation*} 
    \wt(\bld{O})-\wt(\bld{Z}) > p^a-2p^b-\tau_{f-1, z_{f-1,2}}.
  \end{equation*}
  Here $p^b$ and $\tau_{f-1, z_{f-1,2}}$ are $p$-powers less than $p^a$ and they are distinct since their exponents fall into different residue classes mod $f$. We break up the proof into 3 cases.
  
  (1) $p \geq 3$: $\wt(\bld{O})-\wt(\bld{Z})>0$ since $2p^b+\tau_{f-1, z_{f-1,2}}<p^a$. 
  
  (2) $p=2$ and $b+1<a$: $2p^b+\tau_{f-1, z_{f-1,2}}=p^{b+1}+\tau_{f-1, z_{f-1,2}} \leq p^a$, thus $\wt(\bld{O})-\wt(\bld{Z})>0$. 
  
  (3) $p=2$ and $b+1=a$: In this case, we have $k+1=l=f-1$ and $I=\{0, \ldots, f-1 \}$.
  By Proposition \ref{P:mainprop} (v), $u_{f-1, 2}-z_{f-1, 2} \geq 2$. Hence
  \begin{align*}
    O'_2-Z'_2 & = \left(\sum_{s=0}^{u_{f-1,2}} \tau_{i, s} -\sum_{s=0}^{z_{f-1, 2}} \tau_{i, s} \right)+\sum_{i=0}^{k} \left(\sum_{s=0}^{u_{i,2}} \tau_{i, s} -\sum_{s=0}^{z_{i, 2}} \tau_{i, s} \right)\\
    & \geq  p^a + \tau_{f-1, 1+z_{f-1, 2}}- p^b +\sum_{i=0}^{k-1} \tau_{i, z_{i, 2}}
  \end{align*} 
  and by Lemma \ref{L:diff bound},
  \begin{align*}
    \wt(\bld{O})-\wt(\bld{Z}) & = \sum_{j=2}^d O'_j-Z'_j \\
    & >  p^a+\tau_{f-1, 1+z_{f-1, 2}}-p^b+\sum_{i=0}^{k-1} \tau_{i, z_{i, 2}}-\sum_{i \in I} \tau_{i, z_{i, 2}}\\
    & = p^a+\tau_{f-1, 1+z_{f-1, 2}}-p^b-p^b-\tau_{f-1, z_{f-1, 2}}\\
    & = \tau_{f-1, 1+z_{f-1, 2}}-\tau_{f-1, z_{f-1, 2}} \geq 0. \qedhere
  \end{align*}
\end{proof}

\nocite{Book:Goss-1996, Thakur-2017}
\bibliographystyle{amsalpha}
\bibliography{ref}
\end{document}